\theoremstyle{plain}
\newtheorem{theorem}{Theorem}[section]
\newtheorem{lemma}[theorem]{Lemma}
\newtheorem{proposition}[theorem]{Proposition}
\newtheorem{corollary}[theorem]{Corollary}
\newtheorem{observation}[theorem]{Observation}
\theoremstyle{definition}
\newtheorem{definition}[theorem]{Definition}
\newtheorem{remark}[theorem]{Remark}
\newtheorem{example}[theorem]{Example}
\newcommand{\R}{\mathbb{R}}
\newcommand{\N}{\mathbb{N}}
\newcommand{\Z}{\mathbb{Z}}
\renewcommand{\epsilon}{\varepsilon}
\renewcommand{\phi}{\varphi}
\begin{document}

\title{\large Extendability of continuous quasiconvex functions from subspaces}

\author{Carlo Alberto De Bernardi and Libor Vesel\'y}

\address{Dipartimento di Matematica per le Scienze economiche, finanziarie ed attuariali, Universit\`{a} Cattolica del Sacro Cuore, Via Necchi 9, 20123 Milano, Italy}

\address{Dipartimento di Matematica\\
	Universit\`a degli Studi\\
	Via C.~Saldini 50\\
	20133 Milano\\
	Italy}

\email{carloalberto.debernardi@unicatt.it}
\email{carloalberto.debernardi@gmail.com}
\email{libor.vesely@unimi.it}

 \subjclass[2010]{Primary 26B25, 46A55; Secondary 52A41, 52A99}

 \keywords{quasiconvex function, extension, topological vector space}

 \thanks{}

\begin{abstract}
Let $Y$ be a subspace of a topological vector space $X$,  and $A\subset X$ an open convex set that intersects $Y$. 
We say that the property $(QE)$ [property $(CE)$] holds if every continuous quasiconvex [continuous convex] function on $A\cap Y$ admits a
continuous quasiconvex [continuous convex] extension defined on $A$.
We study relations between $(QE)$  and $(CE)$ properties, proving that $(QE)$ always implies $(CE)$ and that, under suitable hypotheses (satisfied for example if $X$ is a normed space and $Y$ is a closed subspace of $X$), the two properties are equivalent. 

By combining the previous implications between $(QE)$  and $(CE)$ properties with known results about the property $(CE)$, we obtain some new positive results about the extension of quasiconvex continuous functions. In particular, we generalize the results contained in \cite{DEQEX} to the infinite-dimensional separable case. Moreover, we also immediately obtain existence of examples in which $(QE)$ does not hold.    
\end{abstract}

\maketitle

%%%%%%%%%%%%%%%%%%%%%%%%%%%%%%%%%%%%%%%%%%%%%%%%%%%%%%

%%%%%%%%%%%%%%%%%% Headings
\markboth{C.A.\ De Bernardi and L.\ Vesel\'y}{Extendability of continuous quasiconvex functions}
%%%%%%%%%%%%%%%%%%%%%%%%%%%

\section{Introduction}\label{S:intro}

A real-valued function $f$, defined on a convex set (in a vector space), is said to be {\em quasiconvex} if all sub-level sets
of $f$ are convex (see Section~\ref{sec: notation}). Quasiconvex functions represent a natural generalization
of convex functions, and
play an important role in Mathematical programming, in Mathematical economics,  
and in many other areas of Mathematical analysis  
(see \cite{AH,BFitzV,gencon,CR,CRdiff, pierskalla,penot} and the references therein for results  and  applications concerning quasiconvex functions).

The present paper deals with an extension problem for continuous quasiconvex functions. For simplicity, let us introduce the
following {\em temporary terminology.} 

%\smallskip

\begin{quoting}
\noindent
Let $Y$ be a subspace of a topological vector space $X$,  and $A\subset X$ an open convex set that intersects $Y$. 
Let us say that $(QE)$ [$(CE)$] holds if every continuous quasiconvex [continuous convex] function on $A\cap Y$ admits a
continuous quasiconvex [continuous convex] extension defined on $A$.
\end{quoting}
%
%\smallskip
%
Notice that the extension properties $(QE)$ and $(CE)$ seem a priori to be two independent problems. 
As we shall see, this is not completely the case.

In the last two decades, the property $(CE)$ has been studied in several
papers, mainly for the particular case of $A=X$ (see e.g. \cite{BV2002,BMV,VZ2010,DEVEX1,DEVEX2,DEX} and the references therein).
The known positive results require assumptions of a separability nature.

As for the quasiconvex case, in the recent paper \cite{DEQEX}, the first-named author 
proved that $(QE)$ holds in some particular, essentially finite-dimensional cases 
(namely, for either $X$ a finite-dimensional normed space, or $X$ an arbitrary normed space,
$Y$ finite-dimensional and $A\cap Y$ a polytope in $Y$). 
However, the proof of the basic result in
\cite{DEQEX} relies on a compactness argument, which cannot be used in the infinite-dimensional case.

In the present paper, we study the property $(QE)$ for general, possibly infinite-dimensional topological vector spaces, obtaining
some new positive results. With respect to \cite{DEQEX}, our approach here is different, consisting in studying
implication relations between $(QE)$ and $(CE)$.

After some preparatory results in Section~\ref{sec: notation}, the following Section~\ref{S:2ext_prop} provides
 the main tool of the present paper:
a characterization of the property $(QE)$,
in terms of ``extendability'' of certain increasing families of open (in $Y$) convex
subsets of $A\cap Y$ to increasing families of open convex subsets of $A$, preserving an appropriate topological property.
On the other hand, to characterize the property $(CE)$, it suffices to consider a similar property for increasing {\em sequences}
of sets, as proved in \cite{DEVEX1} (see also Theorem~\ref{all}). The main results of the paper are contained in  Section~\ref{S:relationship}, in which we use the two characterizations cited above
to study relationship between
$(QE)$ and $(CE)$. First we show that if $(QE)$ holds then $(CE)$ holds as well and, under a weak additional assumption on $X$, 
the subspace $Y$ is closed (see Theorem~\ref{QEimpliesCE}).
Then we prove that the vice-versa holds
whenever there exists a nonnegative continuous convex function on $A$ which is null exactly 
on $A\cap Y$ (see Theorem~\ref{CEimpliesQE}). Notice that this condition is automatically satisfied in the case when
$Y$ is a closed subspace of a normed space $X$.

Section~\ref{S:appl} starts with a simple proposition showing that, under an additional assumption, if $(QE)$ holds then it
is possible to extend 
a non-constant $f$ so that it preserves the range and the set of minimizers of $f$. In the rest of the section, 
 results from previous sections and \cite{DEVEX1, DEVEX2} are applied to obtain
sufficient conditions on $X,Y$ assuring that $(QE)$ holds for $A=X$ (Theorem~\ref{applCE}), as well as for an arbitrary open 
convex set $A\subset X$ (Theorem~\ref{applSCE}). An example of our results for Banach spaces is the following.
\smallskip

\noindent
{\em Let $Y$ be a closed subspace of a Banach space $X$. If $X/Y$ is separable then $(QE)$ holds for $A=X$. If $Y$ is separable 
and either
$Y$ is complemented or $X$ is WCG (weakly compactly generated; e.g., reflexive or separable) then $(QE)$ holds
for arbitrary open convex set $A\subset X$. Moreover, in both cases, the extension preserves the set of minimizers.}
\smallskip

\noindent
Moreover, at the end of Section~\ref{S:appl} we present some examples  in which $(QE)$ does not hold.
Also in this case we apply some results concerning $(CE)$ from \cite{DEVEX1}.   

Finally, let us remark that some positive results on extendability of quasiconvex functions 
from convex sets (instead of subspaces) are contained in our forthcoming paper
\cite{DEVEX_UC} while examples of non-extendability are given in \cite{DEVEqc_examples}. In both cases, an 
important role is played by geometric properties of the convex set in question.

%%%%%%%%%%%%%%%%%%%%%%%

\medskip

\section{Preliminaries}\label{sec: notation}

We consider only nontrivial real topological vector spaces ({\em t.v.s.}, for short). Unless specified otherwise,
such spaces are not assumed to be Hausdorff.
If $X$ is a t.v.s.\ and $E\subset Z\subset X$, we denote by $\overline{E}^Z$, $\partial_Z E$, and $\mathrm{int}_Z E$ 
the relative closure, boundary and interior of $E$ in $Z$.

All functions we consider are real-valued. Let us recall that, if $A$ is a convex subset of $X$, a function $f:A\to\R$ is called {\em quasiconvex} if 
$f((1-t)x+t y)\le \max\{f(x),f(y)\}$ whenever $x,y\in A$ and $t\in[0,1]$. Thus $f$ is quasiconvex if and only if
all its strict sub-level sets
$$
[f< \beta]:=\{x\in A:\, f(x)< \beta\}\qquad(\beta\in\R)
$$
are convex, if and only if all its sub-level sets 
$$
[f\le\beta]:=\{x\in A:\, f(x)\le\beta\}
\qquad(\beta\in\R)
$$ 
are convex. The corresponding super-level sets $[f>\beta]$ and $[f\ge\beta]$ are defined analogously.

Given sets $A_n$ ($n\in\N$) and $A$, the notation $A_n\nearrow A$ means that $A_n\subset A_{n+1}$ ($n\in\N$) and $\bigcup_{n\in\N}A_n=A$;
and $A_n\searrow A$ means that $A_n\supset A_{n+1}$ ($n\in\N$) and $\bigcap_{n\in\N}A_n=A$.

%If $X$ is a normed linear space then $X^*$ is its dual Banach space, and
% $U_X$, $B_X$, and $S_X$ are the open unit ball, the closed unit ball, and the unit sphere of $X$, respectively.
% Other notation is standard, and various topological notions refer to the norm topology of $X$, if not specified otherwise. In the case the topological notions considered refer to a subset $Z$ of $X$, we will specify it in our notation; i.e., if $E$ is a subset of $Z$, we will denote by 
%  Moreover, given $D,H\subset X$, we say that $D$ is dense in $H$ if $H\subset \overline D$.

 For $x,y\in X$, $[x,y]$ denotes the closed segment in $X$ with endpoints $x$ and $y$, and $[x,y):=[x,y]\setminus\{y\}$; meanings of
$(x,y]$ and $(x,y)$ are analogous.

For a convex set $C\subset X$,
it is well known (cf.\ \cite{Klee}) that: if  
$x\in\mathrm{int}_X C$ and $y\in \overline{C}^X$ then $[x,y)\subset\mathrm{int}_X C$; and if $C$ has nonempty interior then
%$\overline{\inte C}=\overline C$ and
 $\mathrm{int}_X(\overline{C}^X)=\mathrm{int}_X C$.

\begin{lemma}\label{closure}
Let $Y$ be a subspace of a t.v.s.\ $X$, and $A\subset X$ an open convex set intersecting $Y$. Then:
\begin{enumerate}[(a)]
\item $A\cap\overline{Y}^X\subset\overline{A\cap Y}^X$;
\item $\overline{A\cap Y}^X=\overline{A}^X\cap\overline{Y}^X$;
\item $A\cap Y$ is relatively closed in $A$ if and only if $Y$ is closed (in $X$).
\end{enumerate} 
\end{lemma}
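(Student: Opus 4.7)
The three items are really a standard ``push towards an interior point'' exercise in convexity, so the plan is just to make that idea precise three times, using that the closure $\overline{Y}^X$ of a subspace in a t.v.s.\ is again a subspace.

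For (a), I would argue by nets (since $X$ is not assumed first countable). Fix $x\in A\cap\overline{Y}^X$ and choose a net $(y_\alpha)\subset Y$ with $y_\alpha\to x$. Because $A$ is open and $x\in A$, eventually $y_\alpha\in A$, so eventually $y_\alpha\in A\cap Y$. Hence $x\in\overline{A\cap Y}^X$. This step is essentially immediate; everything afterwards will reduce to it.

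For (b), the inclusion $\overline{A\cap Y}^X\subset\overline{A}^X\cap\overline{Y}^X$ is trivial. For the reverse, fix $a\in A\cap Y$ (which exists by hypothesis) and pick an arbitrary $x\in\overline{A}^X\cap\overline{Y}^X$. Put $z_t:=(1-t)a+tx$ for $t\in[0,1)$. Convexity, openness of $A$, and the fact that $a\in A$, $x\in\overline{A}^X$ give $z_t\in A$; on the other hand, $a,x\in\overline{Y}^X$ and $\overline{Y}^X$ is a subspace, so $z_t\in\overline{Y}^X$. Thus $z_t\in A\cap\overline{Y}^X$, and (a) places $z_t$ in $\overline{A\cap Y}^X$. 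Letting $t\to1^-$ gives $x\in\overline{A\cap Y}^X$.

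For (c), the $(\Leftarrow)$ direction is trivial. For $(\Rightarrow)$, I would argue by contradiction: assume $A\cap Y$ is relatively closed in $A$, which is equivalent to $A\cap\overline{A\cap Y}^X=A\cap Y$, and assume there exists $z\in\overline{Y}^X\setminus Y$. Taking any $a\in A\cap Y$ and looking at $w_t:=(1-t)a+tz$, openness of $A$ gives $w_t\in A$ for all sufficiently small $t>0$, while $w_t\in\overline{Y}^X$ (since $\overline{Y}^X$ is a subspace). Because $z\notin Y$ and $a\in Y$, $w_t\notin Y$ for any $t>0$. Applying (a) to $w_t\in A\cap\overline{Y}^X$ yields $w_t\in A\cap\overline{A\cap Y}^X=A\cap Y$, the desired contradiction.

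The only mild obstacle is making sure the linear combinations stay in the right sets; this is handled uniformly by recalling that $\overline{Y}^X$ is a subspace and that $A$ is open (so $A+\text{neighborhood of }0$ arguments work). No compactness, completeness, or separability is used, so the lemma will be valid in the full generality of (possibly non-Hausdorff) topological vector spaces claimed in the paper.
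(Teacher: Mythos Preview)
Your proof is correct and follows essentially the same ``push toward an interior point'' strategy as the paper: part (a) is just openness of $A$, and part (b) is exactly the paper's segment argument from a fixed point of $A\cap Y$. The only cosmetic difference is in (c): the paper applies (b) to get $A\cap Y=A\cap\overline{Y}^X$ and then scales, writing $Y=\bigcup_{t>0}t(A\cap Y)=\bigcup_{t>0}t(A\cap\overline{Y}^X)=\overline{Y}^X$, whereas you argue by contradiction via (a); both amount to the same observation that any point of $\overline{Y}^X$ can be pushed into $A$ along a segment toward $A\cap Y$.
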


\begin{proof}
(a) follows easily since $A$ is open.

(b) The inclusion $\subset$ is obvious. To show the other inclusion, take any
$x\in \overline{A}^X\cap\overline{Y}^X$, and fix some $y\in A\cap Y$.
Then $[y,x)\in \mathrm{int}_X(\overline{A}^X)\cap\overline{Y}^X=A\cap\overline{Y}^X
\subset\overline{A\cap Y}^X$ by (a). It follows that $x\in\overline{A\cap Y}^X$.

(c) We can (and do) assume that $0\in A$. Assume that $A\cap Y$ is relatively closed in $A$. Then by (b),
$A\cap Y=\overline{A\cap Y}^X\cap A=\overline{A}^X\cap\overline{Y}^X\cap A=A\cap \overline{Y}^X$.
Consequently, $Y=\bigcup_{t>0}t(A\cap Y)=\bigcup_{t>0}t(A\cap \overline{Y}^X)=\overline{Y}^X$,
and so $Y$ is closed. The other implication is trivial.
\end{proof}

The following lemma coincides with \cite[Lemma~1.1]{DEVEX1}.

\begin{lemma}\label{conv}
Let $Y$ be a subspace of a t.v.s.\ $X$,
$C\subset Y$ and $A\subset X$ convex sets. Then:
\begin{enumerate}
\item[(a)] $\mathrm{conv}(A\cup C)\cap Y=\mathrm{conv}[(Y\cap A)\cup C]$;
\item[(b)] if $C$ is open in $Y$, $A$ is open in $X$ and
$A\cap C\ne\emptyset$, then $\mathrm{conv}(A\cup C)$ is open in $X$.
%\item[(c)] If $\mathrm{int}\,A\ne\emptyset$, and $A$ is dense in
%an open set $H\subset X$,
%then $A\supset H$.
%\item[(d)]
%Let $C$ be open in $Y$ and $A$ open in $X$. If
%$Y$ is dense in $X$ and $C\subset A$, then there exists an
%open convex set $B\subset A$
%such that $\emptyset\ne B\cap Y\subset C$.
\end{enumerate}
\end{lemma}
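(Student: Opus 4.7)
The lemma splits into two parts. For (a), I would argue by double inclusion. The inclusion $\mathrm{conv}[(Y\cap A)\cup C]\subset\mathrm{conv}(A\cup C)\cap Y$ is immediate, since $(Y\cap A)\cup C$ is contained both in $A\cup C$ and in the subspace $Y$. For the reverse inclusion, after disposing of the trivial case when one of $A,C$ is empty, I would use the fact that for nonempty convex $A,C$ every element of $\mathrm{conv}(A\cup C)$ is of the form $z=ta+(1-t)c$ with $a\in A$, $c\in C$, $t\in[0,1]$. Assuming in addition $z\in Y$, the cases $t=0$ and $t=1$ are immediate, while for $0<t<1$ the relation $ta=z-(1-t)c$ combined with $z,c\in Y$ and $Y$ being a subspace forces $a\in Y\cap A$, so indeed $z\in\mathrm{conv}[(Y\cap A)\cup C]$.

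For (b), my plan is to realize $\mathrm{conv}(A\cup C)$ as an explicitly open subset of $X$, namely
$$V:=\bigcup_{t\in(0,1]}\bigl(tA+(1-t)C\bigr).$$
For each $t\in(0,1]$ and each $c\in C$, the set $tA+(1-t)c$ is open in $X$, since multiplication by the nonzero scalar $t$ and translation by $(1-t)c$ are homeomorphisms of $X$. Hence $tA+(1-t)C=\bigcup_{c\in C}\bigl(tA+(1-t)c\bigr)$ is a union of $X$-open sets, and so is $V$. The inclusion $V\subset\mathrm{conv}(A\cup C)$ is clear; for the reverse, the only case requiring work is a point $z\in C$ (every other point is already of the form $ta+(1-t)c$ with $t>0$). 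Here I would use the hypothesis $A\cap C\neq\emptyset$: pick $p\in A\cap C$ and consider the curve
$$\gamma(t):=\frac{z-tp}{1-t},\qquad t\in[0,1),$$
which lies in $Y$ and is continuous with respect to the topology of $Y$, with $\gamma(0)=z\in C$. Since $C$ is open in $Y$, there exists $t_0\in(0,1)$ with $\gamma(t_0)\in C$, and then $z=t_0\,p+(1-t_0)\gamma(t_0)\in t_0A+(1-t_0)C\subset V$.

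The main obstacle is precisely this last step: $C$ is open only in the subspace topology of $Y$, not in $X$, so one cannot directly promote a $Y$-neighborhood of a point $z\in C$ to an $X$-neighborhood inside $\mathrm{conv}(A\cup C)$. The convex-combination trick with $p\in A\cap C$ effectively trades the deficiency of $C$ (not open in $X$) for the genuine $X$-openness of $A$, by insisting on a strictly positive weight on the $A$-factor; continuity of the vector space operations in $Y$ is what guarantees that the $C$-factor can simultaneously be kept inside $C$.
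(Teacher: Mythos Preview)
Your argument for both parts is correct. The paper does not give its own proof of this lemma; it merely states that the result coincides with \cite[Lemma~1.1]{DEVEX1}, so there is nothing to compare against beyond noting that your self-contained proof is valid.
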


One of the main tools of the present paper will be a simple proposition, 
Proposition~\ref{P:lowerlevelsets}, on constructing
quasiconvex functions from increasing families of open convex sets. For brevity of formulations, we shall
use the following terminology.

\begin{definition}\label{D:Omega}
Let $Z$ be a topological space, and $E\subset Z$. By an {\em $\Omega(E)$-family in $Z$} we mean a family
$\{D_\alpha\}_{\alpha\in \R}$ of sets in $Z$ such that 
\begin{align*}
\bigcap_{\alpha\in\R}D_\alpha=\emptyset,\ 
\bigcup_{\alpha\in\R}D_\alpha=E,\  
\text{and}\ 
\overline{D}^E_\alpha\subset D_\beta \text{ whenever $\alpha,\beta\in\R$, $\alpha<\beta$.}
\end{align*}
\end{definition}

The second part of the following proposition, that is, reconstructing a quasiconvex function from its sublevel sets,
is trivial and part of the mathematical folklore, and also the first part appears implicitly or in a slightly different form elsewhere (e.g., in \cite{DEQEX,CR}).

\begin{proposition}\label{P:lowerlevelsets} 
Let $Z$ be a convex set in a t.v.s.\ $X$.

\smallskip

\begin{enumerate}[(a)]
\item If $\{D_\alpha\}_{\alpha\in\R}$ is an $\Omega(Z)$-family of relatively open convex subsets of $Z$,
then the function
$$
f(x):=\sup\{\alpha\in\R:\,x\notin D_\alpha\} \qquad (x\in Z)
$$
is a continuous quasiconvex function on $Z$ such that for each $\alpha\in\R$
\begin{equation}\label{lls}
[f<\alpha]=\bigcup_{\beta<\alpha}D_\beta
\quad\text{and}\quad
[f\le\alpha]=\bigcap_{\gamma>\alpha}D_\gamma\,.
\end{equation}

\smallskip

\item If $g$ is a continuous quasiconvex function on $Z$, then the sets
$$
D_\alpha:=[g<\alpha]\qquad(\alpha\in\R)
$$
form an $\Omega(Z)$-family of relatively open convex subsets of $Z$. Moreover, the function
$f$ associated to $\{D_\alpha\}_{\alpha\in\R}$ by (a) coincides with $g$.
\end{enumerate}
\end{proposition}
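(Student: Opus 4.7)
The plan is to treat (a) and (b) separately, with (a) being the substantive direction.

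For part (a), I would first observe that the condition $\overline{D}^Z_\alpha\subset D_\beta$ for $\alpha<\beta$ already forces the weaker inclusion $D_\alpha\subset D_\beta$, so the family is increasing. Together with $\bigcup_{\alpha\in\R}D_\alpha=Z$ and $\bigcap_{\alpha\in\R}D_\alpha=\emptyset$, this ensures that for every $x\in Z$ the set $\{\alpha:\,x\notin D_\alpha\}$ is a nonempty, downward-closed, proper subset of $\R$, so $f(x)$ is a well-defined real number. The identities \eqref{lls} then follow almost directly from the definition of $f$: $f(x)<\alpha$ amounts to the existence of some $\beta<\alpha$ with $x\in D_\beta$, while $f(x)\le\alpha$ amounts to $x\in D_\gamma$ for every $\gamma>\alpha$.

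Next I would derive quasiconvexity and continuity from these identities. Each strict sublevel set $[f<\alpha]$ is an increasing union of the convex sets $D_\beta$ with $\beta<\alpha$, hence convex, which yields quasiconvexity. For continuity, I would show that both $[f<\alpha]$ and $[f>\alpha]$ are relatively open in $Z$. Openness of $[f<\alpha]$ is immediate from \eqref{lls}. For $[f>\alpha]$, the key step is the equality
\[
[f>\alpha]=\bigcup_{\beta>\alpha}\bigl(Z\setminus\overline{D}^Z_\beta\bigr).
\]
The inclusion $\supset$ uses that $y\notin D_\beta$ implies $f(y)\ge\beta$; the inclusion $\subset$ uses that if $f(x)>\alpha$ and $\alpha<\beta<f(x)$, then $x\notin\overline{D}^Z_\beta$, for otherwise $\overline{D}^Z_\beta\subset D_\gamma$ for every $\gamma>\beta$ would force $f(x)\le\beta$. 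This is the one place where the reinforced condition $\overline{D}^Z_\alpha\subset D_\beta$ (rather than a bare inclusion $D_\alpha\subset D_\beta$) is essential, and I expect it to be the main technical point of the argument.

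For part (b), continuity and quasiconvexity of $g$ make each $D_\alpha:=[g<\alpha]$ a relatively open convex subset of $Z$; continuity also yields $\overline{D}^Z_\alpha\subset [g\le\alpha]\subset D_\beta$ whenever $\alpha<\beta$, while the intersection and union conditions are immediate since $g$ is real-valued. Thus $\{D_\alpha\}_{\alpha\in\R}$ is an $\Omega(Z)$-family, and the identity $f=g$ reduces to the tautology $g(x)=\sup\{\alpha\in\R:\,g(x)\ge\alpha\}$, finishing the proof.
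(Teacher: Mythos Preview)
Your proposal is correct and follows essentially the same approach as the paper, just with considerably more detail where the paper is terse. The only cosmetic difference is in the verification of lower semicontinuity: you show directly that $[f>\alpha]=\bigcup_{\beta>\alpha}(Z\setminus\overline{D}^Z_\beta)$ is relatively open, whereas the paper shows the complementary statement that $[f\le\alpha]=\bigcap_{\gamma>\alpha}\overline{D}^Z_\gamma$ is relatively closed; both arguments hinge on the same use of the reinforced inclusion $\overline{D}^Z_\beta\subset D_\gamma$ for $\beta<\gamma$.
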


\begin{proof}
\ 

(a) It is clear that $f$ is well-defined, and it is an elementary exercise to show that it satisfies \eqref{lls}.
Since all the sets $[f<\alpha]$ are convex and relatively open in $Z$, $f$ is quasiconvex and upper semicontiuous on $Z$.
Moreover, the definition of an $\Omega(Z)$-family implies that
$[f\le\alpha]=\bigcap_{\gamma>\alpha}\overline{D}^Z_\gamma$ for each $\alpha\in\R$, and hence
$f$ is also lower semicontinuous on $Z$.

(b) The first part is clear. For the second part, if $x\in Z$ then we have $f(x)=\sup\{\alpha\in\R:\, g(x)\ge\alpha\}=g(x)$.
\end{proof}

%%%%%%%%%%%%%%%%
\begin{example}
Consider the family $\{D_\alpha\}_{\alpha\in\R}$ of open convex sets in $Z=\R$ given by: $D_\alpha=\emptyset$ for $\alpha<0$,
$D_\alpha=(-\infty,0)$ for $0\le\alpha<1$, and $D_\alpha=\R$ for $\alpha\ge1$. Then the family $\{D_\alpha\}_{\alpha\in\R}$
is nondecreasing but it is not an $\Omega(\R)$-family. The corresponding function $f$ defined in Proposition~\ref{P:lowerlevelsets}(a) 
is the characteristic function of the interval $[1,\infty)$, and hence it is quasiconvex but not lower semicontinuous.
\end{example}
%%%%%%%%%%%%%%%%

Let $Y$ be a closed subspace of  a t.v.s. $X$. Let us recall that the {\em quotient-space topology} on $X/Y$ is the vector
topology whose open sets are the sets of the form $q(A)$, $A\subset X$ open, 
where $q\colon X\to X/Y$ is the quotient map. Moreover, $X/Y$ is locally convex whenever $X$ is (see, e.g., \cite{KN}). We shall also say
that a set $E\subset X$ is a {\em convexly $G_\delta$-set} if $E$ is the intersection of a sequence of open convex sets.
Also recall that a convex set $C$ in a t.v.s.\  is called {\em algebraically bounded} if it contains no half-lines.

Since the proof of the following technical lemma is a bit long, we postpone it to Appendix.

\begin{lemma}\label{L:G-delta}
Let $X$ be a t.v.s., $Y\subset X$ a subspace, and $A\subset X$ an open convex set intersecting $Y$. 
Consider the following three groups of conditions.

\smallskip

\begin{enumerate}
\item[A.]
\begin{enumerate}
\item[($a_1$)] There exists a continuous quasiconvex function $\delta\colon A\to[0,\infty)$ such that
$\delta^{-1}(0)=A\cap Y$;
\item[($a_2$)] $A\cap Y$ is a convexly $G_\delta$-set in $X$.
\end{enumerate}

\medskip

\item[B.]
\begin{enumerate}
\item[($b_1$)] There exists a continuous quasiconvex function $f\colon X/Y\to[0,\infty)$ such that
$f^{-1}(0)=\{0\}$;
\item[($b_2$)] the origin of $X/Y$ is a convexly $G_\delta$-set in $X/Y$.
\end{enumerate}

\medskip

\item[C.]
\begin{enumerate}
\item[($c_1$)] There exists a continuous convex function $d\colon X\to[0,\infty)$ such that $d^{-1}(0)=Y$;
\item[($c_2$)] there exists a continuous seminorm $p$ on $X$ such that $p^{-1}(0)=Y$;
\item[($c_3$)] there exists an open convex set $V\subset X$ such that $\bigcap_{\epsilon>0}\epsilon V=Y$;
\item[($c_4$)] $X/Y$ contains an algebraically bounded convex neighborhood of $0\in X/Y$;
\item[($c_5$)] there exists a continuous norm on $X/Y$.
\end{enumerate}
\end{enumerate}

\smallskip
\noindent
Then the conditions of any single group are mutually equivalent and, for
$i,j\in\{1,2\}$ and $k\in\{1,\dots,5\}$, one has the implications
$$
(a_i)\Leftarrow(b_j)\Leftarrow(c_k).
$$
Moreover, if $A=X$ then also $(a_i) \Rightarrow (b_j)$.
\end{lemma}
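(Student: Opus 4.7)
The plan is to prove the equivalences inside each of the three groups separately and then the cross-group implications. For group~C I would close the cycle $(c_1)\Rightarrow(c_2)\Rightarrow(c_3)\Rightarrow(c_4)\Rightarrow(c_5)\Rightarrow(c_1)$ by classical Minkowski-functional arguments. Starting from $d$ as in $(c_1)$, the symmetrization $\tilde d(x):=\max\{d(x),d(-x)\}$ is still continuous convex with $\tilde d^{-1}(0)=Y$, and the Minkowski functional of $\{\tilde d<1\}$ is a continuous seminorm with kernel exactly $Y$ (convexity with $\tilde d(0)=0$ yields $\tilde d(x)\le\epsilon\,\tilde d(x/\epsilon)$ for $\epsilon\in(0,1)$, so $\tilde d(x/\epsilon)<1$ for all $\epsilon>0$ forces $\tilde d(x)=0$). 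The remaining steps are routine: $V:=\{p<1\}$ gives $(c_3)$; $q(V\cap(-V))$ is a symmetric open convex neighborhood of $0\in X/Y$, and a half-line in it would, by symmetry and convexity, produce a full line violating $\bigcap_{\epsilon>0}\epsilon\,q(V\cap(-V))=\{0\}$, hence $(c_4)$; the Minkowski functional of a balanced algebraically bounded convex neighborhood of $0\in X/Y$ is a continuous norm, giving $(c_5)$; finally, composing a norm with $q$ gives $(c_1)$.

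The group~A and group~B equivalences really constitute a single statement, since $(b_1)\Leftrightarrow(b_2)$ is the specialisation of $(a_1)\Leftrightarrow(a_2)$ to the triple $(X/Y,\{0\},X/Y)$. In either case the direction $(\cdot)_1\Rightarrow(\cdot)_2$ is immediate from the strict sublevel sets of the given function. For the converse, given a decreasing sequence $V_n$ of open convex sets with the desired intersection, I would first refine to $V'_n$ satisfying $\overline{V'_{n+1}}\subset V'_n$ (in group~B this is achieved by iterating $V'_{n+1}:=\tfrac{1}{2}(V'_n\cap V_{n+1})$ and using the standard fact $\overline{tU}\subset U$ for $t\in(0,1)$ and any convex open neighborhood $U$ of $0$), let $p_n$ be the Minkowski functional of $V'_n$, and set
\[
f(x):=\sup_{n\in\N}\phi_n(p_n(x)),\qquad \phi_n(t):=\min\bigl\{\tfrac{1}{n},\,(t-1+\tfrac{1}{n})_+\bigr\}.
\]
Each $\phi_n\circ p_n$ is continuous and quasiconvex (composition of a non-decreasing function with a sublinear one), so the supremum is quasiconvex; the uniform bound $\phi_n\circ p_n\le 1/n$ gives uniform convergence and hence continuity of $f$; and $x\neq 0$ implies $x\notin V'_{n_0}$ for some $n_0$, whence $p_{n_0}(x)\ge 1$ and $\phi_{n_0}(p_{n_0}(x))=1/n_0>0$, giving $f^{-1}(0)=\{0\}$. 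Equivalently, the $V'_n$ may be threaded into an $\Omega$-family and Proposition~\ref{P:lowerlevelsets} applied.

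The cross-group chain $(c_k)\Rightarrow(b_j)\Rightarrow(a_i)$ is then easy: a continuous norm on $X/Y$ is already a continuous quasiconvex function vanishing only at the origin ($(c_5)\Rightarrow(b_1)$), and setting $\delta(x):=f(q(x))$ for $x\in A$ yields a continuous quasiconvex function on $A$ with $\delta^{-1}(0)=A\cap Y$ ($(b_1)\Rightarrow(a_1)$). For the extra implication when $A=X$, I would prove $(a_2)\Rightarrow(b_2)$ directly: if $Y=\bigcap_n U_n$ with $U_n$ open convex (WLOG decreasing), then the $q(U_n)$ are open convex neighborhoods of $0\in X/Y$, and the key is that $\bigcap_n q(U_n)=\{0\}$. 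Indeed, given $z\in\bigcap_n q(U_n)$ and $x_0\in q^{-1}(z)$, choose $y_n\in Y$ with $x_0+y_n\in U_n$; since $Y\subset U_n$ and $U_n$ is convex, the segment from any $y\in Y$ to $x_0+y_n$ lies in $U_n$, which shows $Y+tx_0\subset U_n$ for every $t\in[0,1]$; intersecting over $n$ gives $Y+tx_0\subset Y$ and hence $x_0\in Y$, i.e.\ $z=0$. The main obstacle is the group~A case of the refinement step: the $V'_n$ must contain the (possibly non-singleton) convex set $A\cap Y$, closures must be taken in $A$, and the shrinking should be anchored on $A\cap Y$ rather than on a single point (e.g.\ via Minkowski sums such as $\tfrac{1}{2}(A\cap Y)+\tfrac{1}{2}(V'_n\cap V_{n+1})$), with care to verify that the intersection remains $A\cap Y$.
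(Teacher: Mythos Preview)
Your plan is sound for groups B and C and for the cross-group implications, but there is a real gap at the point you yourself flag: the implication $(a_2)\Rightarrow(a_1)$ for general $A$. Your explicit formula $f=\sup_n\phi_n\circ p_n$ relies on Minkowski functionals $p_n$, which makes sense only when the target zero-set is a single point; for group~A the zero-set is $A\cap Y$, so that formula is unavailable and you would have to fall back on the $\Omega$-family route. For that route you propose the refinement $V'_{n+1}:=\tfrac12(A\cap Y)+\tfrac12(V'_n\cap V_{n+1})$, but you do not verify the crucial inclusion $\overline{V'_{n+1}}^{A}\subset V'_n$ (closure of a sum is not contained in the sum of closures), nor do you explain how the resulting decreasing family is to be completed to an $\Omega(A)$-family --- in particular, how to produce the \emph{upper} part that exhausts $A$. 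Without these two pieces the construction of $\delta$ does not close.

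The paper's solution to this step is both different from your suggestion and simpler. Rather than anchoring on the whole set $A\cap Y$, it fixes a single point $0\in A\cap Y$, dilates each $U_k$ to $V_k:=\tfrac{k+1}{k}U_k$ so that $V_{k+1}\subset\lambda_k V_k$ with $\lambda_k<1$ (whence $\overline{V_{k+1}}\subset V_k$ by the same ``$\overline{tU}\subset U$'' fact you cite), and builds the upper part as $(n+2)V_1$ to exhaust $X$. The whole family is then intersected with $A$ only at the very end. The point you were worried about --- that the intersection $\bigcap_k V_k$ may overshoot $A\cap Y$ --- is handled by the elementary observation that $\bigcap_k V_k\subset 2(A\cap Y)=2A\cap Y$, so that $\bigl(\bigcap_k V_k\bigr)\cap A=A\cap Y$ exactly. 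This sidesteps the need for any Minkowski-sum refinement. Your $(a_2)\Rightarrow(b_2)$ argument for $A=X$ is essentially correct (modulo the harmless overclaim at $t=1$), but note that the paper's version is shorter: once one observes that an open convex set $U$ containing the subspace $Y$ satisfies $U+Y=U$, one gets $q^{-1}(q(U_n))=U_n$ directly, and $\bigcap_n q(U_n)=\{0\}$ follows immediately.
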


\begin{proof}
See Appendix.
\end{proof}

\medskip

%%%%%%%%%%%%%%%%%%%%%%%%%%%%%%%%%%%%%%%%%%%%%%%%%%%%%%

\section{Two extension properties and their characterizations}\label{S:2ext_prop}

\begin{definition}
Let $Y$ be a subspace of a t.v.s.\ $X$, and $A\subset X$ an open convex set that intersects $Y$.
We shall say that the couple $(X,Y)$ has:
\begin{enumerate}[(a)]
\item the {\em $CE(A)$-property} if each continuous convex function $f\colon A\cap Y\to\R$ can be extended to a continuous convex
function $F\colon A\to\R$;
\item the {\em $QE(A)$-property} if each continuous quasiconvex function $f\colon A\cap Y\to\R$ can be extended to 
a continuous quasiconvex function $F\colon A\to\R$.
\end{enumerate}
\end{definition}

The following characterization of the $CE(A)$-property was proved in \cite[Theorem~2.6]{DEVEX1}.

\begin{theorem}\label{all}
Let $X$ be a t.v.s., $Y\subset X$ a subspace, and $A\subset X$ an open convex set intersecting $Y$. Then the following assertions are equivalent.
\begin{enumerate}
\item[(i)] Every continuous convex function on $A\cap Y$ admits a continuous convex extension to $A$.
\item[(ii)] For every sequence $\{C_n\}$ of open convex sets in $Y$ such that $C_n\nearrow A\cap Y$, there exists a sequence $\{D_n\}$ of open convex sets in $X$ such that $D_n\nearrow A$ and $D_n\cap Y=C_n$, for each $n\in\N$.
\item[(ii')] For every sequence $\{C_n\}$ of open convex sets in $Y$ such that $C_n\nearrow A\cap Y$, there exists a sequence $\{D_n\}$ of open convex sets in $X$ such that $D_n\nearrow A$ and $D_n\cap Y\subset C_n$, for each $n\in\N$.
\end{enumerate}
\end{theorem}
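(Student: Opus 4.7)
The plan is to establish (ii) $\Leftrightarrow$ (ii') as an easy reduction, and to prove (i) $\Leftrightarrow$ (ii) via the correspondence between a continuous convex function on $A \cap Y$ and the sequence of its open convex sub-level sets $[f < n]$. The implication (ii) $\Rightarrow$ (ii') is trivial. For (ii') $\Rightarrow$ (ii), starting from $\{D_n\}$ satisfying (ii'), I would set $\widetilde{D}_n := \conv(D_n \cup C_n)$: by Lemma~\ref{conv}(b) this is open in $X$ once $D_n$ meets $C_n$ (which holds for all sufficiently large $n$, since $D_n \nearrow A$ and $A \cap Y \ne \emptyset$, while small indices are handled by trivial reindexing), and Lemma~\ref{conv}(a) together with $D_n \cap Y \subset C_n$ and the convexity of $C_n$ yields $\widetilde{D}_n \cap Y = C_n$.

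For (i) $\Rightarrow$ (ii), the idea is to upgrade the combinatorial data $\{C_n\}$ to a genuine continuous convex function on $A \cap Y$. Fix $y_0 \in C_1$ (reindexing so $C_1 \ne \emptyset$) and let $p_n$ be the Minkowski gauge of $C_n - y_0$ in $Y$, so each $p_n$ is continuous sublinear, the sequence $(p_n)$ is pointwise non-increasing, and $y \in C_n$ iff $p_n(y - y_0) < 1$. With positive summable weights $c_n$, the locally finite sum
$$ f(y) := \sum_{n=1}^{\infty} c_n \max\bigl(p_n(y-y_0) - 1,\, 0\bigr) $$
is a continuous convex function on $A \cap Y$, since at each $y$ only the finitely many terms with $y \notin C_n$ contribute. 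For suitable thresholds $\alpha_n \to \infty$ one has $[f < \alpha_n] \subset C_n$; extending $f$ to a continuous convex $F$ on $A$ by hypothesis (i), the sub-level sets $D_n := [F < \alpha_n]$ are open convex, exhaust $A$, and satisfy $D_n \cap Y = [f < \alpha_n] \subset C_n$. This yields (ii').

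The delicate direction is (ii) $\Rightarrow$ (i). Given a continuous convex $f$ on $A \cap Y$, applying (ii) to $C_n := [f < n]$ produces $\{D_n\}$ open convex in $A$ with $D_n \cap Y = C_n$ and $D_n \nearrow A$. The main obstacle is to manufacture from $\{D_n\}$ a continuous \emph{convex} (not merely quasiconvex) extension $F$ of $f$. A direct application of Proposition~\ref{P:lowerlevelsets} to an $\Omega(A)$-refinement of $\{D_n\}$ would yield only a quasiconvex candidate whose restriction to $A \cap Y$ need not even coincide with $f$. My plan is to iterate (ii) on a countable dense family of thresholds $\{\alpha_k\} \subset \R$, applying the hypothesis to the sub-level sequences $[f < \alpha_k]$ to obtain a refined family of open convex sets in $A$, and then to assemble them via a Minkowski-gauge interpolation anchored at a common interior point $x_0 \in \bigcap_n D_n$. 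The hardest step is carrying out this interpolation in such a way that the resulting $F$ is convex on all of $A$ while restricting exactly to $f$ on $A \cap Y$, which forces a careful joint treatment of all levels rather than a level-by-level construction.
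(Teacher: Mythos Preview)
The paper does not actually prove this theorem: it is quoted as \cite[Theorem~2.6]{DEVEX1} and used as a black box. So there is no in-paper argument to compare against, and your proposal must stand on its own.

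Your handling of (ii)$\Leftrightarrow$(ii') via $\widetilde D_n:=\conv(D_n\cup C_n)$ and Lemma~\ref{conv} is correct. In (i)$\Rightarrow$(ii') there is a small but real slip: with $f(y)=\sum_n c_n\max\bigl(p_n(y-y_0)-1,0\bigr)$ you only get $f(y)\ge 0$ when $y\notin C_n$, since $p_k(y-y_0)$ can equal $1$ on $\partial C_k$; no choice of thresholds $\alpha_n>0$ then forces $[f<\alpha_n]\subset C_n$. A trivial fix (e.g.\ replacing $p_n-1$ by $2p_n-1$, or working with the gauges of $\tfrac12(C_n-y_0)$) gives $f(y)\ge n$ whenever $y\notin C_n$ and rescues the argument.

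The genuine gap is (ii)$\Rightarrow$(i). What you wrote is a plan, not a proof: you obtain open convex $D_n\subset A$ with $D_n\cap Y=[f<n]$, and then propose to ``interpolate via Minkowski gauges anchored at a common interior point'' over a dense set of levels. But you yourself flag the hardest step---producing a function that is \emph{convex} on all of $A$ and \emph{equals} $f$ on $A\cap Y$---and give no mechanism for it. Applying (ii) level by level gives you, for each $\alpha$, a different family $\{D^{(\alpha)}_n\}$ with no coherence across $\alpha$, and a Minkowski-gauge interpolation of a single sequence $\{D_n\}$ does not in general restrict to $f$ (only to something with the same integer sub-level sets). The actual construction in \cite{DEVEX1} proceeds differently: one extends $f$ to a single continuous convex function on $A$ by building it as a countable supremum of affine-plus-gauge pieces, using the sets $D_n$ to control continuity and to push the construction out to all of $A$. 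Without such a concrete device, your (ii)$\Rightarrow$(i) is incomplete.
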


The aim of the present short section is to provide a characterization of the $QE(A)$-property in a similar spirit. In this case, sequences are not enough; we have to consider
monotone families of convex sets defined 
in Definition~\ref{D:Omega}. The main tool is Proposition~\ref{P:lowerlevelsets}.

\begin{theorem}\label{teo:CharExQuasiconv}
Let $X$ be a t.v.s., $Y\subset X$ a subspace, and $A\subset X$ an open convex set intersecting $Y$. Then the following assertions are equivalent.
	\begin{enumerate}
		\item $(X,Y)$ has the
		$\mathrm{QE}(A)$-property.
		\item For each $\Omega(A\cap Y)$-family $\{C_\alpha\}_{\alpha\in\R}$ of open convex sets in $Y$ 
		there exists an $\Omega(A)$-family $\{D_\alpha\}_{\alpha\in\R}$ of open convex sets in $X$ such that
		\begin{equation}\label{eqA}
		\bigcup_{\gamma<\alpha}C_\gamma \subset D_\alpha\cap Y\subset C_\alpha \qquad(\alpha\in\R).
		\end{equation}	
	\end{enumerate}
\end{theorem}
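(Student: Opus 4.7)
The plan is to use Proposition~\ref{P:lowerlevelsets} as a dictionary translating between continuous quasiconvex functions and $\Omega$-families of their open convex strict sublevel sets. Both implications will then amount to applying this dictionary on $A\cap Y$ and on $A$, and showing that the extension problem for the function matches the extension problem for the family.

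For $(1)\Rightarrow(2)$, I would start with an $\Omega(A\cap Y)$-family $\{C_\alpha\}_{\alpha\in\R}$ of open convex sets in $Y$, and use Proposition~\ref{P:lowerlevelsets}(a) to produce the associated continuous quasiconvex function $f$ on $A\cap Y$. The $QE(A)$-property supplies a continuous quasiconvex extension $F\colon A\to\R$. Then the sets $D_\alpha:=[F<\alpha]$ form an $\Omega(A)$-family of open convex sets by Proposition~\ref{P:lowerlevelsets}(b). The sandwich condition \eqref{eqA} boils down to the identity $D_\alpha\cap Y=[f<\alpha]=\bigcup_{\beta<\alpha}C_\beta$ combined with $C_\beta\subset\overline{C_\beta}^{A\cap Y}\subset C_\alpha$ whenever $\beta<\alpha$, which is built into the definition of an $\Omega$-family.

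For $(2)\Rightarrow(1)$, I would start with a continuous quasiconvex $f\colon A\cap Y\to\R$; Proposition~\ref{P:lowerlevelsets}(b) says that the family $C_\alpha:=[f<\alpha]$ is an $\Omega(A\cap Y)$-family of open convex sets in $Y$. Hypothesis (2) then yields an $\Omega(A)$-family $\{D_\alpha\}_{\alpha\in\R}$ of open convex sets in $X$ satisfying \eqref{eqA}, and applying Proposition~\ref{P:lowerlevelsets}(a) to this family produces a continuous quasiconvex function $F$ on $A$.

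The delicate point, and where I expect the only real subtlety to lie, is verifying $F|_{A\cap Y}=f$: the sandwich \eqref{eqA} does \emph{not} give $D_\alpha\cap Y=C_\alpha$, so the identity must be extracted by comparing strict sublevel sets with a small index shift. Concretely, for $x\in A\cap Y$ and $\alpha\in\R$: if $F(x)<\alpha$ then $x\in D_\beta$ for some $\beta<\alpha$, so $x\in D_\beta\cap Y\subset C_\beta$, giving $f(x)<\beta<\alpha$; conversely, if $f(x)<\alpha$, choose $\beta,\beta'$ with $f(x)<\beta<\beta'<\alpha$, so that $x\in C_\beta\subset\bigcup_{\gamma<\beta'}C_\gamma\subset D_{\beta'}\cap Y\subset D_{\beta'}$, which gives $F(x)<\beta'<\alpha$. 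These two one-sided implications together yield $F(x)=f(x)$, completing the argument.
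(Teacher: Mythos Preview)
Your proposal is correct and follows essentially the same route as the paper, using Proposition~\ref{P:lowerlevelsets} as the dictionary in both directions. The only difference is that in $(2)\Rightarrow(1)$ the paper observes that, since here $C_\alpha=[f<\alpha]$, one has $\bigcup_{\gamma<\alpha}C_\gamma=C_\alpha$, so the sandwich \eqref{eqA} collapses to $D_\alpha\cap Y=C_\alpha$ and the verification $F|_{A\cap Y}=f$ becomes immediate; your index-shift argument is also valid (with the minor correction that $x\in D_{\beta'}$ yields only $F(x)\le\beta'$, which still suffices).
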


\begin{proof} $(\mathrm i)\Rightarrow(\mathrm{ii})$.
Let  $\{C_\alpha\}_{\alpha\in\R}$ be an $\Omega(A\cap Y)$-family of open convex sets in $Y$. 
By Proposition~\ref{P:lowerlevelsets}(a), the function $f:A\cap Y\to \R$, $f(y):=\sup\{\alpha\in\R:  y\not\in C_\alpha\}$, is a continuous quasiconvex function
on $A\cap Y$ such that $\bigcup_{\gamma<\alpha}C_\gamma=[f<\alpha]$.
Let $F\colon A\to\R$ be a continuous quasiconvex extension of $f$. For $\alpha\in\R$, define $D_\alpha:=[F<\alpha]$, and observe that
$\overline{D}^A_\alpha\subset D_\beta$ whenever $\alpha<\beta$ are real numbers. Thus $\{D_\alpha\}_{\alpha\in\R}$
is an $\Omega(A)$-family of open convex sets in $X$. Moreover, 
$\bigcup_{\gamma<\alpha}C_\gamma=[f<\alpha]=D_\alpha\cap Y\subset C_\alpha$ for each $\alpha\in\R$.

$(\mathrm{ii})\Rightarrow(\mathrm{i})$. Let $f\colon A\cap Y\to \R$ be a continuous quasiconvex function. For each $\alpha\in\R$ define 
$C_\alpha:=[f<\alpha]$, and notice that $\{C_\alpha\}_{\alpha\in\R}$
is an $\Omega(A\cap Y)$-family of open convex sets in $Y$. 
Let $\{D_\alpha\}_{\alpha\in\R}$ be an $\Omega(A)$-family of open convex sets in $X$ that satisfies \eqref{eqA}. 
By Proposition~\ref{P:lowerlevelsets}, the function $F\colon A\to \R$, given by $F(x):=\sup\{\alpha\in\R :  x\not\in D_\alpha \}$, 
is continuous and quasiconvex on $A$. Moreover, Proposition~\ref{P:lowerlevelsets}(b) gives that 
for every $y\in A\cap Y$ we have $F(y)=\sup\{\alpha\in\R : y\notin C_\alpha\}=f(y)$.
Hence $F$ extends $f$, and we are done.
\end{proof}

\medskip

%%%%%%%%%%%%%%%%%%%%%%%%%%%%%%%%%%%%%%%%%%%%%%%%%%%%

\section{Relationship between the properties $QE(A)$ and $CE(A)$}\label{S:relationship}

The aim of the present section is to show that the quasiconvex extension property $QE(A)$ implies the
convex extension property $CE(A)$, and that the two properties are equivalent under some additional
assumptions. We shall also see that the $QE(A)$-property is possible only for closed subspaces $Y$. Notice that
this is not the same for the $CE(A)$-property; indeed, by \cite[Lemma~3.2]{DEVEX1} and its proof,
{\em $(X,Y)$ has the $CE(A)$-property if and only $(X,\overline{Y}^X)$ has the $CE(A)$-property.}

\smallskip

\begin{theorem}\label{QEimpliesCE}
Let $X$ be a t.v.s., $Y\subset X$ a subspace, and $A\subset X$ an open convex set that intersects $Y$.
Assume that the couple $(X,Y)$ has the $QE(A)$-property. Then $(X,Y)$ has the $CE(A)$-property.

If, moreover, the singleton $\{0\}\subset X$ is a convexly $G_\delta$-set (e.g., if $X$ is locally convex and metrizable)
then $Y$ is closed.
\end{theorem}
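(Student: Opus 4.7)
For the first assertion ($QE(A) \Rightarrow CE(A)$), my plan is to verify the set-theoretic condition in Theorem~\ref{all}(ii'). Given a sequence $\{C_n\}$ of open convex subsets of $Y$ with $C_n\nearrow A\cap Y$, I want to produce $\{D_n\}$ in $X$ with $D_n\nearrow A$ and $D_n\cap Y\subset C_n$. The plan is to package the whole sequence into one continuous real-valued quasiconvex function on $A\cap Y$ and let $QE(A)$ do the work. After translating so some fixed $y_0\in C_1$ is $0$, let $p_n\colon Y\to[0,\infty)$ be the Minkowski functional of $C_n$ (continuous sublinear, with $\{p_n<1\}=C_n$). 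Choose positive summable $\alpha_n$ with $M:=\sum_n\alpha_n$ and a continuous non-decreasing convex cutoff $\eta\colon[0,\infty)\to[0,1]$ with $\eta(t)=1$ for $t\ge 1$. Setting $h(y):=\sum_n\alpha_n\,\eta(p_n(y))$, the termwise dominance by $\alpha_n$ makes $h$ continuous, and each summand is convex, so $h$ is continuous convex with $0\le h<M$. Then $g:=-\log(M-h)$ is real-valued, continuous, and quasiconvex (its sublevel sets are sublevel sets of $h$). Because $y\notin C_k$ forces $\eta(p_k(y))=1$, one finds a sequence $a_n\nearrow\infty$ with $\{g<a_n\}\subset C_n$. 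Applying $QE(A)$ to $g$ gives a continuous quasiconvex extension $G\colon A\to\R$; then $D_n:=\{G<a_n\}$ is open and convex in $A$, $D_n\nearrow A$ since $G$ is real-valued, and $D_n\cap Y=\{g<a_n\}\subset C_n$, as desired.

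For the second assertion, I would argue by contradiction. Assume $Y$ is not closed. By Lemma~\ref{closure}(c) there is $z\in A\cap(\overline{Y}^X\setminus Y)$. The hypothesis that $\{0\}$ is convexly $G_\delta$ in $X$, applied to Lemma~\ref{L:G-delta} with subspace $\{0\}$ and $A=X$, yields a continuous quasiconvex $\delta\colon X\to[0,\infty)$ with $\delta^{-1}(0)=\{0\}$. Consider the family
$$C_\alpha:=\{y\in A\cap Y:\delta(y-z)<\alpha\}\ (\alpha>0),\qquad C_\alpha:=\emptyset\ (\alpha\le 0);$$
this is an $\Omega(A\cap Y)$-family of open convex sets in $Y$ because $\delta(\cdot-z)$ is continuous, real-valued, and quasiconvex on $A\cap Y$, with strictly positive values (as $z\notin Y$). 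By Theorem~\ref{teo:CharExQuasiconv} applied to $QE(A)$, there is an $\Omega(A)$-family $\{D_\alpha\}$ of open convex sets in $X$ with $D_\alpha\cap Y=C_\alpha$ for every $\alpha$. The topological position of $z$ will drive the contradiction: for $\alpha\le 0$, $D_\alpha\cap Y=\emptyset$, and since $D_\alpha$ is open in $A$ and $z\in\overline{Y}^X$ we must have $z\notin D_\alpha$; for $\alpha>0$, points $y\in C_\alpha$ with $\delta(y-z)$ arbitrarily small exist (by $z\in\overline{A\cap Y}^X$ and continuity of $\delta$), so $z\in\overline{D_\alpha}^A$, whence the strict nesting $\overline{D_\alpha}^A\subset D_\beta$ forces $z\in D_\beta$ for every $\beta>\alpha>0$. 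This pins $\alpha^*:=\inf\{\alpha:z\in D_\alpha\}$ at $0$ with $z\notin D_0$. The intended contradiction is then extracted by pushing the $\Omega(A)$ strict-nesting axiom across the critical value $\alpha=0$: combining $\overline{D_0}^A\subset D_\beta$ for $\beta>0$ with $D_0\cap\overline{Y}^X=\emptyset$ (open set disjoint from $Y$) while $z$ simultaneously lies in every $D_\beta$ ($\beta>0$) but in none of the $D_\alpha$ ($\alpha\le 0$) should be irreconcilable with an $\Omega(A)$-family, possibly after reinforcing the construction with a second witness $z'\in\overline{Y}^X\setminus Y$ on the opposite side of $z$ to block the degenerate case $D_0=\emptyset$.

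The main obstacle I anticipate is Part~2: the naive $\Omega$-family built solely from $\delta(\cdot-z)$ merely places $z$ on the edge of the ``entry'' of the family (at $\alpha^*=0$), which is not immediately inconsistent with the $\Omega(A)$-axioms; the precise contradiction must leverage either the strict nesting across the critical level or an augmented family obstructing both the empty and non-empty scenarios for $D_0$. Part~1, by contrast, is a technically delicate but conceptually straightforward construction once one commits to the Minkowski-functional plus logarithmic transformation scheme.
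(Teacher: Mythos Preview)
Both parts of your proposal have genuine gaps.

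\textbf{Part 1.} The cutoff $\eta$ you postulate cannot exist. A nondecreasing convex function $\eta\colon[0,\infty)\to[0,1]$ with $\eta\equiv 1$ on $[1,\infty)$ has slope $0$ on $[1,\infty)$; convexity forces all earlier slopes to be $\le 0$, and together with ``nondecreasing'' this makes $\eta$ constant, so $h\equiv M$ and $g$ is undefined. If instead you take $\eta$ merely nondecreasing (e.g.\ $\eta(t)=\min(t,1)$), each summand $\alpha_n\eta(p_n(\cdot))$ is only quasiconvex, and a sum of quasiconvex functions need not be quasiconvex. Concretely, with $Y=\R^2$, $C_1=(-1,1)^2$, $C_2=(-1,1)\times(-100,100)$ and $\alpha_1=\alpha_2=1$, the function $h=\min(p_1,1)+\min(p_2,1)$ satisfies $h(0.7,0)=1.4$ and $h(0.4,49)=1.49$, but at the midpoint $h(0.55,24.5)=1.55$, so $[h<1.5]$ is not convex. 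The paper avoids this by building the $\Omega(A\cap Y)$-family geometrically: it sets $C'_n:=\tfrac{n}{n+1}C_n$ (so that $C'_n\subset\lambda_{n+1}C'_{n+1}$ with $\lambda_{n+1}<1$) and then \emph{interpolates by scaling}, defining $B_\alpha:=\phi_n(\alpha)\,C'_n$ for $\alpha\in[n,n+1)$ with an affine $\phi_n$ carrying $[n,n+1]$ to $[\lambda_n,1]$. This produces a genuine $\Omega$-family without ever forming sums.

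\textbf{Part 2.} You correctly locate the critical level at $\alpha=0$, but there is no contradiction to extract: your family $\{C_\alpha\}$ arises from the function $g(y)=\delta(y-z)$ on $A\cap Y$, and this function \emph{does} admit a continuous quasiconvex extension to $A$, namely $\delta(\cdot-z)|_A$ itself (which vanishes at $z$). So the $\Omega(A)$-family with $z\in\bigcap_{\beta>0}D_\beta\setminus\bigcup_{\alpha\le 0}D_\alpha$ genuinely exists and no ``reinforcement by a second witness'' will break it. The missing idea is to make the function \emph{unbounded below} near $z$: set $f(y):=\log\delta(y-z)$ on $A\cap Y$. This is continuous and quasiconvex (increasing function of a quasiconvex function), and since $z\in\overline{A\cap Y}^X$ and $\delta$ is continuous with $\delta(0)=0$, we have $f(y)\to-\infty$ as $y\to z$ through $A\cap Y$. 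Hence no real-valued continuous extension of $f$ can exist at $z\in A$, contradicting $QE(A)$ directly. In $\Omega$-family language, this corresponds to having $C_\alpha\ne\emptyset$ for \emph{every} $\alpha\in\R$, which is exactly what your construction lacks.
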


\begin{proof}
Assume that $(X,Y)$ has the $QE(A)$-property. 
To show that $(X,Y)$ has the $CE(A)$-property, it suffices to verify condition (ii') in Theorem~\ref{all}.
Let $\{C_n\}_{n\in\N}$ be a sequence of open convex sets in $Y$ such that $C_n\nearrow A\cap Y$. We can (and do)
assume that $0\in C_1$. Let us define a family $\{B_\alpha\}_{\alpha\in\R}$ as follows.

For $n\in\N$ put $C_n':=\frac{n}{n+1} C_n$,  and notice that these sets satisfy:
\begin{itemize}
	\item $C_n'\nearrow A\cap Y$;
	\item for each $n\in\N$, there exists $\lambda_{n+1}\in(0,1)$ such that  $C'_n\subset\lambda_{n+1}C'_{n+1}$.
\end{itemize}
Put $\lambda_1:=1/2$ and, for each $n\in\N$, consider the increasing affine function $\phi_n:\R\to\R$ defined by $$\phi_n(t)=t-n+(n+1-t)\lambda_{n},\qquad\qquad t\in\R.$$ 
Notice that  $\phi_n(n)=\lambda_{n}$ and $\phi_n(n+1)=1$.
Now, given a real number $\alpha\ge1$, take the unique $n\in\N$ with $\alpha\in[n,n+1)$, and define
$
B_\alpha:=\phi_n(\alpha)\,C'_{n}$. For $\alpha<1$, define $B_\alpha:=\emptyset$. Observe that $B_\alpha\subset C_n'\subset\lambda_{n+1}C_{n+1}'=B_{n+1}$ for $\alpha\in[n,n+1)$ and $n\in\N$.

It is easy to verify that $\{B_\alpha\}_{\alpha\in\R}$ is an $\Omega(A\cap Y)$-family of open convex sets in $Y$.
By Theorem~\ref{teo:CharExQuasiconv}, there exists an $\Omega(A)$-family $\{D_\alpha\}_{\alpha\in\R}$ of open convex sets in $X$
that satisfies \eqref{eqA}. In particular, $D_n\nearrow A$, and
$D_n\cap Y\subset B_{n}\subset C_n$ for each $n\in\N$.
By Theorem~\ref{all}, $(X,Y)$ has the $CE(A)$-property.

Now, let the origin of $X$ be a convexly $G_\delta$-set.
If $Y$ is not closed, by Lemma~\ref{closure}(c) there exists $\bar{x}\in\overline{A\cap Y}^A\setminus Y$. Clearly
$\bar{x}\in A$. By Lemma~\ref{L:G-delta} there exists a continuous quasiconvex function $\delta\ge0$ on $X$ such that $\delta^{-1}(0)=0$.
Then the formula $f(y):=\log\delta(y-\bar{x})$ defines a continuous quasiconvex function on $A\cap Y$ that admits no
continuous extension defined at $\bar{x}$. But this contradicts the $QE(A)$-property.
\end{proof}

The remaining part of the present section is devoted to the implication ``$\,CE(A)\Rightarrow QE(A)\,$''.
Let us start with the following technical lemma.

\begin{lemma}\label{L:cl}
Let $X$ be a t.v.s., $Y\subset X$ a closed subspace, and 
$A\subset X$ an open convex set intersecting $Y$. Let $d\colon A\to[0,\infty)$
be a continuous convex function such that $d^{-1}(0)=A\cap Y$.
Let $U, D_2$ be open convex subsets of $A$, and $C$ an open convex set in $Y$.
Assume also that:
\begin{enumerate}[(a)]
\item $U\cap Y\subset C$ and $\overline{C}^{A\cap Y}\subset D_2$;
\item there exists a neighborhood $W\subset X$ of the origin such that $U+W\subset D_2$;
\item $d(U)$ is bounded.
\end{enumerate}
Then the set $D_1:=\mathrm{conv}(U\cup C)$ satisfies
$\overline{D}_1^A\subset D_2$.
\end{lemma}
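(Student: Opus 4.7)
The plan is to fix an arbitrary $\bar z\in\overline{D_1}^A$ and show directly that $\bar z\in D_2$. After replacing $W$ by a balanced neighborhood of $0$ contained in it (which is harmless), the key preliminary observation is the \emph{buffer estimate}: for each decomposition $z=(1-t)u+tc\in D_1$ with $u\in U$, $c\in C$, $t\in[0,1]$, convexity of $D_2$ together with (b) and the inclusion $c\in C\subset \overline{C}^{A\cap Y}\subset D_2$ coming from (a) yields
$$
z+(1-t)W = (1-t)(u+W)+tc \subset (1-t)D_2+tD_2 \subset D_2.
$$

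Next I would take a net $z_\nu\to\bar z$ in $D_1$ with decompositions $z_\nu=(1-t_\nu)u_\nu+t_\nu c_\nu$ and pass to a subnet along which $t_\nu\to t\in[0,1]$. In the non-degenerate case $t<1$, one has $1-t_\nu\ge\eta$ eventually for some $\eta>0$; by balancedness of $W$, $\eta W\subset(1-t_\nu)W$, so the buffer estimate becomes $z_\nu+\eta W\subset D_2$ for large $\nu$. Because $\eta W$ is a balanced neighborhood of $0$ and $z_\nu\to\bar z$, eventually $\bar z-z_\nu\in\eta W$, hence $\bar z\in z_\nu+\eta W\subset D_2$.

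The remaining case $t=1$ is where the buffer degenerates, and hypothesis (c) becomes essential. Setting $M:=\sup d(U)<\infty$, convexity of $d$ with $d(c_\nu)=0$ forces $d(z_\nu)\le(1-t_\nu)M\to 0$, and continuity of $d$ gives $d(\bar z)=0$, i.e., $\bar z\in A\cap Y$ (using $Y$ closed). To conclude $\bar z\in D_2$, I would try to show $\bar z\in\overline{C}^{A\cap Y}$ and then invoke (a). The natural approximants are $y_\nu:=(1-t_\nu)y_0+t_\nu c_\nu\in C$, where $y_0\in U\cap Y\subset C$ is fixed; the discrepancy $z_\nu-y_\nu=(1-t_\nu)(u_\nu-y_0)$ must be driven to $0$, and this is the main obstacle, since the $u_\nu$ need not lie in any topologically bounded subset of $X$. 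I expect this step to rely on a careful combination of the closedness of $Y$ with (b) and (c) to control the asymptotic behaviour of $(1-t_\nu)u_\nu$; the possibility $U\cap Y=\emptyset$ would be treated separately, by observing that in that situation the buffer estimate together with continuity of $d$ prevents $\overline{D_1}^A\cap Y$ from producing any new boundary points outside $\overline{C}^{A\cap Y}$.
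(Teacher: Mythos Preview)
Your treatment of the non-degenerate case $t<1$ is correct and coincides with the paper's argument (up to the swap of $t$ and $1-t$). The gap is in the degenerate case $t=1$. You correctly reach $\bar z\in A\cap Y$, but from there your plan---approximating $\bar z$ by $y_\nu=(1-t_\nu)y_0+t_\nu c_\nu\in C$ and hoping that $(1-t_\nu)(u_\nu-y_0)\to 0$---does not go through: nothing in the hypotheses forces the net $(u_\nu)$ to lie in a topologically bounded set (only $d(U)$ is bounded, which is a very weak condition), so there is no reason for $(1-t_\nu)u_\nu$ to be controlled. The vague appeal to ``a careful combination of the closedness of $Y$ with (b) and (c)'' does not point to an actual mechanism, and the separate handling you sketch for $U\cap Y=\emptyset$ is not an argument.

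The paper avoids this obstacle entirely by \emph{not} trying to exhibit explicit approximants in $C$. Once $\bar z\in A\cap Y$ is known, it invokes two earlier lemmas: first, $\mathrm{conv}(U\cup C)\cap Y=\mathrm{conv}\bigl((U\cap Y)\cup C\bigr)=C$ (using $U\cap Y\subset C$); second, the closure identity $\overline{D_1}^X\cap Y=\overline{D_1\cap Y}^X$ for an open convex set $D_1$ intersecting the closed subspace $Y$. Together these give
\[
\bar z\in \overline{D_1}^X\cap Y\cap A=\overline{D_1\cap Y}^X\cap A=\overline{C}^X\cap Y\cap A=\overline{C}^{A\cap Y}\subset D_2,
\]
where the last inclusion is hypothesis (a). No control of the individual $u_\nu$ is needed. (Openness of $D_1$ comes from the companion lemma on convex hulls, once $U\cap C=U\cap Y\ne\emptyset$; this holds in every application of the lemma in the paper.) So the missing idea is to trade the analytic approximation for a purely set-theoretic identity relating $\overline{D_1}\cap Y$ and $\overline{C}$.
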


\begin{proof}
%We claim that $\overline{D}_1^X\subset D_2\cup Y$. 
Fix an arbitrary $x\in \overline{D}_1^A$. Then $x$ is the limit of a net $(z_i)_{i\in I}$ in $D_1$, 
and hence we can write
$$
x=\lim_{i\in I} \bigl[ t_i u_i +(1-t_i)c_i \bigr]
\quad\text{with}\quad t_i\in[0,1],\ u_i\in U,\ c_i\in C.
$$
Passing to a subnet if necessary,
we can (and do) assume that $t_i\to\bar{t}\in[0,1]$. 

If $\bar{t}=0$ then by the properties of $d$ we have 
$$
d(x)\le\liminf_{i}[t_i d(u_i)+(1-t_i) d(c_i)]=\liminf_{i}t_i d(u_i)=0,
$$
and hence $x\in A\cap Y$. 
By Lemma~\ref{conv}(a) and Lemma~\ref{closure},
$x\in \overline{D}_1^X\cap Y\cap A=\overline{D_1\cap Y}^X\cap A=\overline{C}^Y\cap A\subset D_2$. 

If $\bar{t}>0$, fix $\eta>0$ such that $\eta/\bar{t}<1$, and then choose
$i\in I$ so that $x\in  t_i u_i +(1-t_i)c_i+\eta W$ and $\eta/t_i<1$. Then
\begin{align*}
x &\textstyle \in 
t_i U +\eta W +(1-t_i)C = t_i\bigl(U+\frac{\eta}{t_i}W\bigr)+(1-t_i)C \\
&\subset t_i(U+W)+(1-t_i)C\subset D_2.
\end{align*}
This completes the proof.
\end{proof}

%%%

\begin{observation}\label{O:LC-like}
Let $Y$ be a subspace of a t.v.s.\ $X$, and $A\subset X$ an open convex set intersecting $Y$.
Let $(X,Y)$ have the $CE(A)$-property. Then for each open convex set $C\subset A\cap Y$ in $Y$
there exists an open convex set $H\subset A$ in $X$ such that $H\cap Y=C$.

\smallskip\noindent
\rm
(Indeed, it suffices to apply Theorem~\ref{all}(ii) with $C_1:=C$ and $C_n:=A\cap Y$ for any integer $n\ge2$.)
\end{observation}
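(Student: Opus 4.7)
The plan is to obtain $H$ as the first term of a sequence produced by Theorem~\ref{all}(ii). Theorem~\ref{all} characterizes the $CE(A)$-property in terms of extending \emph{sequences} of open convex subsets of $Y$, whereas here we need to extend only a \emph{single} open convex set $C$. The trick is to pad with copies of the ambient slice $A\cap Y$: I would set $C_1:=C$ and $C_n:=A\cap Y$ for every integer $n\ge 2$. Since $C\subset A\cap Y$, each $C_n$ is open and convex in $Y$, the inclusions $C_n\subset C_{n+1}$ hold trivially, and $\bigcup_{n\in\N} C_n=A\cap Y$, so that $C_n\nearrow A\cap Y$ in the sense of Section~\ref{sec: notation}.

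Applying Theorem~\ref{all}(ii) to this sequence (which is permitted because $(X,Y)$ has the $CE(A)$-property) yields a sequence $\{D_n\}$ of open convex sets in $X$ with $D_n\nearrow A$ and $D_n\cap Y=C_n$ for every $n\in\N$. I would then set $H:=D_1$: it is open and convex in $X$; it is contained in $A$ because $D_n\nearrow A$ forces $D_1\subset\bigcup_{n\in\N} D_n=A$; and $H\cap Y=D_1\cap Y=C_1=C$, which is exactly what the observation requires. There is no real obstacle here: the entire content of the statement is the remark that the $n=1$ case of Theorem~\ref{all}(ii) already encodes the single-set extension property, and that padding by the trivial tail $C_n=A\cap Y$ is the minimal way to turn the single input $C$ into a legitimate increasing exhaustion of $A\cap Y$.
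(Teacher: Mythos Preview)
Your proof is correct and follows exactly the approach indicated in the paper: set $C_1:=C$, $C_n:=A\cap Y$ for $n\ge 2$, apply Theorem~\ref{all}(ii), and take $H:=D_1$. There is nothing to add.
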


\begin{theorem}\label{CEimpliesQE}
Let $Y$ be a closed subspace of a t.v.s.\ $X$, and $A\subset X$ an open convex set that intersects $Y$. 
Assume there exists a continuous convex function $d\colon A\to[0,\infty)$ such that $d^{-1}(0)=A\cap Y$.
%\emph{(For instance, this is satisfied whenever $X/Y$ admits a continuous norm.)}
If the couple $(X,Y)$ has the $CE(A)$-property then it has the $QE(A)$-property as well.
\end{theorem}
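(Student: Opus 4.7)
By Theorem~\ref{teo:CharExQuasiconv}, it suffices to show that for every $\Omega(A\cap Y)$-family $\{C_\alpha\}_{\alpha\in\R}$ of open convex sets in $Y$, there exists an $\Omega(A)$-family $\{D_\alpha\}_{\alpha\in\R}$ of open convex sets in $X$ satisfying the inclusions \eqref{eqA}. My plan is to use the $CE(A)$-property on a countable cofinal sequence of levels to obtain open convex extensions in $X$, then to truncate them with $d$ (which turns them into $d$-bounded sets, a condition needed by Lemma~\ref{L:cl}), and finally to interpolate at all the remaining real levels via a convex-hull construction.

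First, apply Theorem~\ref{all}(ii) to the nested sequence $\{C_n\}_{n\in\N}$ (for which $C_n\nearrow A\cap Y$ by the $\Omega$-property), obtaining open convex sets $\hat H_n\subset A$ with $\hat H_n\cap Y=C_n$ and $\hat H_n\nearrow A$. Truncate by setting $\hat D_n:=\hat H_n\cap\{x\in A:d(x)<n\}$. Since $d$ vanishes on $A\cap Y$, we still have $\hat D_n\cap Y=C_n$; since every $x\in A$ has finite $d$-value and lies eventually in $\hat H_n$, we still have $\hat D_n\nearrow A$; and now $d$ is bounded on each $\hat D_n$, which will be needed for Lemma~\ref{L:cl}(c).

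Second, for each $\alpha\in\R$ define
\[
D_\alpha := \mathrm{conv}(\hat D_{n(\alpha)}\cup C_\alpha),
\]
where the index $n(\alpha)$ is chosen strictly less than $\alpha$ so that $C_{n(\alpha)}\subset C_\alpha$ (for instance $n(\alpha):=\lfloor\alpha\rfloor-1$ for large $\alpha$, with the convention $\hat D_n:=\emptyset$ for $n\le 0$; the small-$\alpha$ tail is handled separately, setting $D_\alpha:=\emptyset$ when $C_\alpha=\emptyset$, or otherwise using Observation~\ref{O:LC-like} combined with a $d$-tube to produce a nested sequence of small extensions). By Lemma~\ref{conv}(b), $D_\alpha$ is open convex in $X$, and by Lemma~\ref{conv}(a), $D_\alpha\cap Y=\mathrm{conv}(C_{n(\alpha)}\cup C_\alpha)=C_\alpha$, which lies in the bracket required by \eqref{eqA}. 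The monotonicity $D_\alpha\subset D_\beta$ and covering $\bigcup_\alpha D_\alpha=A$ follow from the corresponding properties of the $\hat D_n$'s and the $C_\alpha$'s; the condition $\bigcap_\alpha D_\alpha=\emptyset$ is ensured by the tail handling.

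Finally, the critical $\Omega$-family property $\overline{D_\alpha}^A\subset D_\beta$ for $\alpha<\beta$ is verified via Lemma~\ref{L:cl} applied with $U=\hat D_{n(\alpha)}$, $C=C_\alpha$, $D_2=D_\beta$. Hypothesis (a) follows from the $\Omega$-property (any $\gamma\in(\alpha,\beta)$ yields $\overline{C_\alpha}^{A\cap Y}\subset C_\gamma\subset D_\beta$), and hypothesis (c) from the $d$-boundedness of $\hat D_{n(\alpha)}$. The main obstacle is hypothesis~(b): producing a $0$-neighborhood $W$ with $\hat D_{n(\alpha)}+W\subset D_\beta$, since Theorem~\ref{all}(ii) does not itself provide ``room'' between successive $\hat H_n$. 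I expect to resolve this by combining the $d$-truncation with an iterative refinement of the $\hat H_n$-sequence (replacing it by a subsequence with intermediate convex-hull steps) and by exploiting the convexity inequality $d(u+w)\le(1-t)d(u)+t\,d(u+w/t)$ to control $d$ on $\hat D_{n(\alpha)}+W$, so that a neighborhood-sized transverse gap is built into each transition before Lemma~\ref{L:cl} is invoked.
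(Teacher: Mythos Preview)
Your outline tracks the paper's argument closely --- same reduction via Theorem~\ref{teo:CharExQuasiconv}, same use of Theorem~\ref{all} on a cofinal integer sequence, same truncation by $[d<n]$, same convex-hull interpolation $D_\alpha=\mathrm{conv}(U_\alpha\cup C_\alpha)$, and same final appeal to Lemma~\ref{L:cl}. But there is a genuine gap precisely where you flag it: hypothesis~(b) of Lemma~\ref{L:cl}. With your definition, the inner set $U_\alpha=\hat D_{n(\alpha)}$ is \emph{constant} on each unit interval of $\alpha$, so for $\alpha<\beta$ with $n(\alpha)=n(\beta)=:n$ you would need a neighborhood $W$ of $0$ with $\hat D_n+W\subset\mathrm{conv}(\hat D_n\cup C_\beta)$. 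Since $C_\beta\subset Y$, this convex hull gives no extra room in directions transverse to $Y$ beyond $\hat D_n$ itself, so the inclusion fails in general. The $d$-truncation only provides slack in the $d$-direction, and your suggested convexity inequality for $d$ controls $d(u+w)$ but does not force $u+w$ into $D_\beta$; ``iterative refinement of the $\hat H_n$-sequence'' is not a concrete mechanism.

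The paper's fix is a scaling trick. After translating so that $0\in C_1$, one replaces $E_n\cap[d<n]$ by $K_n:=\frac{n}{n+1}\bigl(E_n\cap[d<n]\bigr)$, obtaining $K_n\subset\lambda_{n+1}K_{n+1}$ with $\lambda_{n+1}\in(0,1)$, and then interpolates \emph{continuously}: for $\alpha\in[n,n+1)$ one sets $U_\alpha:=\phi_n(\alpha)K_n$ with $\phi_n$ an increasing affine function satisfying $\phi_n(n)=\lambda_n$ and $\phi_n(n+1)=1$. This guarantees $U_\alpha\subset\lambda_{\alpha,\beta}U_\beta$ with $\lambda_{\alpha,\beta}<1$ for \emph{all} reals $1\le\alpha<\beta$, so that $W:=(1-\lambda_{\alpha,\beta})U_\beta$ is a genuine neighborhood of $0$ with $U_\alpha+W\subset U_\beta\subset D_\beta$ by convexity, delivering hypothesis~(b) for free. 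The case where $C_\alpha\ne\emptyset$ for all real $\alpha$ requires a separate backward induction for $\alpha<1$, again built on shrinking around a chosen point of $C_n$ to create the neighborhood gap; your one-line ``$d$-tube'' remark does not cover this, nor does it ensure $\bigcap_\alpha D_\alpha=\emptyset$.
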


\begin{proof}
Let $(X,Y)$ have the $CE(A)$-property, and let $\{C_\alpha\}_{\alpha\in\R}$ be an $\Omega(A\cap Y)$-family of 
open convex sets in $Y$.
The set $J:=\{\alpha\in\R : C_\alpha\ne\emptyset\}$ is clearly an upper-unbounded interval. If $J=[\alpha_0,\infty)$,
we can (and do) assume that $\alpha_0=1$. Otherwise, $J$ is an open interval and we can (and do) assume that $J=\R$.
In both cases, $C_1$ is nonempty, and hence we can (and do) assume that $0\in C_1$. 

Since $C_n\nearrow A\cap Y$ as $n\in\N$ tends to infinity, by Theorem~\ref{all} there exists a sequence
$\{E_n\}_{n\in\N}$ of open convex sets in $X$ such that $E_n\nearrow A$, and $E_n\cap Y=C_n$ for each $n\in\N$.
Define
$K_n:=\frac{n}{n+1}\bigl(E_n\cap[d<n]\bigr)$ ($n\in\N$), and notice that these sets satisfy:
$$
K_n\nearrow A,\quad K_n\subset [d<n],\quad\text{and}\quad
K_n\subset\lambda_{n+1}K_{n+1}\quad\text{with}\quad \lambda_{n+1}\in(0,1).
$$
For every $\alpha\ge1$ take the unique $n\in\N$ with $\alpha\in[n,n+1)$, and define
$\phi_n(\alpha):=\alpha-n+(n+1-\alpha)\lambda_{n}$,
$$
U_\alpha:=\phi_n(\alpha) K_{n}
\quad\text{and}\quad
D_\alpha:=\mathrm{conv}(U_\alpha\cup C_\alpha).
$$
Notice that $\phi_n(n)=\lambda_n$, $\phi_n(n+1)=1$, and $\phi_n$ is
an increasing affine function. 
Since $U_\alpha\cap Y=\phi_n(\alpha)\frac{n}{n+1}C_n\subset C_n\subset C_\alpha$, we have $D_\alpha\cap Y= C_\alpha$
by Lemma~\ref{conv}. Moreover, since $U_\alpha\subset K_n$, $d$ is bounded on $U_\alpha$.

It is easy to see that for real numbers $1\le\alpha<\beta$ we always have
$0\in U_\alpha \subset\lambda_{\alpha,\beta}U_\beta$
for some $\lambda_{\alpha,\beta}\in(0,1)$.

In the case when $J=[1,\infty)$, we easily complete the proof by defining $D_\alpha=\emptyset$ for $\alpha<0$;
indeed, an application of Lemma~\ref{L:cl} shows that then the family $\{D_\alpha\}_{\alpha\in\R}$
has the  properties from Theorem~\ref{teo:CharExQuasiconv}(ii).

Now consider the case $J=\R$. Let us construct open convex sets $D_\alpha$ ($\alpha<1$) proceeding by induction
with respect to the (unique) integer $n\le 0$ such that $\alpha\in[n,n+1)$.
Assume that, for some integer $n\le0$, we have already constructed $D_\beta$ for all $\beta\ge n+1$. 
(For $n=0$, this is the case.) By Observation~\ref{O:LC-like}, there exists an open convex set $H_n\subset A$
such that $H_n\cap Y=C_n$. Define $E_n:=H_n\cap D_{n+1}\cap[d<\frac1{|n|+2}]$, and notice that
$E_n\cap Y\subset C_n$.
Fix an arbitrary $y_n\in E_n\cap Y$, and consider the open convex set $W_n:=(1/2)(E_n-y_n)$ containing $0$.
Then we have $y_n+2W_n=E_n$. For $\alpha\in[n,n+1)$ we define
$$
U_\alpha:=(y_n+W_n) + (\alpha-n)W_n
\quad\text{and}\quad
D_\alpha:=\mathrm{conv}(U_\alpha\cup C_\alpha).
$$ 
Since $U_\alpha\cap Y\subset E_n\cap Y\subset C_n\subset C_\alpha$, we have
$D_\alpha\cap Y=C_\alpha$ by Lemma~\ref{conv}. Notice also that
$U_\alpha+(n+1-\alpha)W_n=E_n\subset D_{n+1}\cap[d<\frac1{|n|+2}]$ for each such $\alpha$.

So we have defined a family $\{D_\alpha\}_{\alpha\in\R}$ of open convex sets in $X$.
An application of Lemma~\ref{L:cl} allows us to conclude that
$\overline{D}_\alpha^A\subset D_{\beta}$ for any couple of real numbers $\alpha<\beta$.
Moreover, $\bigcap_{\alpha\in\R}D_\alpha=\bigcap_{n\in\N}D_{-n}\subset d^{-1}(0)=A\cap Y$ and hence
$\bigcap_{\alpha\in\R}D_\alpha=\bigcap_{\alpha\in\R}D_\alpha\cap Y=\bigcap_{\alpha\in\R}C_\alpha=\emptyset$.
Now, it easily follows that 
$\{D_\alpha\}_{\alpha\in\R}$ is an $\Omega(A)$-family such that $D_\alpha\cap Y=C_\alpha$ for each $\alpha\in\R$.  
In particular, it satisfies the condition (ii) in Theorem~\ref{teo:CharExQuasiconv}, as needed.

\end{proof}

%%%

\medskip

%%%%%%%%%%%%%%%%%%%%%%%%%%%%%%%%%%%%%%%%%%%%%%%%%%%%%

\section{Some applications}\label{S:appl}

Let us start with a simple result on extensions of continuous quasiconvex functions that preserve the range and the
set of minimizers.

\begin{proposition}\label{better}
Let $X$ be a t.v.s., $Y\subset X$ a closed subspace, and $A\subset X$ an open convex set that intersects $Y$.
Assume also that:
\begin{enumerate}[(a)]
\item the couple $(X,Y)$ has the $QE(A)$-property;
\item there exists a continuous quasiconvex function $\delta\colon A\to[0,+\infty)$ such that $\delta^{-1}(0)=A\cap Y$.
\end{enumerate}
Then each non-constant continuous quasiconvex function $f\colon A\cap Y\to\R$ admits a continuous quasiconvex extension
$F\colon A\to \R$ such that $F(A)=f(A\cap Y)$ and,
for $a:=\inf F(A)$, 
$$
\{ x\in A : F(x)=a\}=\{ y\in A\cap Y : f(x)=a\}.
$$
\end{proposition}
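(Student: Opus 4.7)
The plan is to apply the $QE(A)$-property not to $f$ directly but to a reparametrisation $\rho\circ f$ whose range is a ``model'' interval $J$, and then to compose the resulting extension with a continuous non-decreasing right-inverse $\sigma\colon\R\to I$ of $\rho$; the function $\delta$ will enter only in the very last step, to force the $a$-level set of the final extension to coincide with that of $f$.

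Set $I:=f(A\cap Y)$, which is a non-degenerate interval since $f$ is continuous and non-constant on the connected set $A\cap Y$; write $a:=\inf I$ and $b:=\sup I$. Fix an increasing homeomorphism $\rho\colon I\to J$ onto a model interval $J\subset\R$, normalised so that $\inf J=0$ when $a\in I$ (and $J$ is open on the left otherwise), and symmetrically for the right endpoint. Let $\sigma\colon\R\to I$ be the continuous non-decreasing surjection that coincides with $\rho^{-1}$ on $J$, is constantly $a$ on $(-\infty,\inf J]$ when $a\in I$, and is constantly $b$ on $[\sup J,\infty)$ when $b\in I$.

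Apply the $QE(A)$-property to the continuous quasiconvex function $\rho\circ f\colon A\cap Y\to J$ to get a continuous quasiconvex extension $H\colon A\to\R$. If $a\in I$, replace $H$ by
$$
H'(x):=\max\bigl\{H(x),\,\eta(\delta(x))\bigr\},
$$
where $\eta\colon[0,\infty)\to[0,\infty)$ is any continuous strictly increasing function with $\eta(0)=0$; otherwise set $H':=H$. As a maximum of two continuous quasiconvex functions, $H'$ is continuous quasiconvex, and because $\delta\equiv 0$ and $\rho\circ f\ge 0$ on $A\cap Y$ in the relevant case, $H'$ still extends $\rho\circ f$.

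Finally define $F:=\sigma\circ H'$. Composition with the continuous non-decreasing $\sigma$ preserves continuous quasiconvexity, and $\sigma\circ\rho=\mathrm{id}_I$ yields $F|_{A\cap Y}=f$. The inclusions $F(A)\subset\sigma(\R)=I$ and $F(A)\supset f(A\cap Y)=I$ give $F(A)=I$. For the minimizer identity: if $a\notin I$, then $\sigma$ avoids $a$ and both sides are empty; if $a\in I$, then $F(x)=a$ iff $H'(x)\in\sigma^{-1}(a)=(-\infty,0]$, which by strict monotonicity of $\eta$ holds iff $H(x)\le 0$ and $\delta(x)=0$, equivalently iff $x\in A\cap Y$ and $f(x)=a$. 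The main obstacle is the case-by-case setup of $\rho$, $J$, and $\sigma$, which depends on whether each endpoint of $I$ is $\pm\infty$, finite and attained, or finite and not attained; once the normalisation is made, all the verifications reduce to the standard fact that non-decreasing continuous composition preserves quasiconvexity.
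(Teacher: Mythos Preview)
Your proof is correct and follows essentially the same strategy as the paper's: reparametrise, extend via $QE(A)$, then truncate back to the original range, using a $\max$ against a function of $\delta$ to pin down the minimisers. The only organisational difference is that the paper treats the two endpoints of $I$ asymmetrically and sequentially (first arranging $F_1<b$ via a homeomorphism $(-\infty,b)\to\R$ or $F_1\le b$ via $\min\{G,b\}$, then taking $\max\{F_1,\,a+\varepsilon\arctan\delta\}$ with $\varepsilon$ small enough that $a+\varepsilon\pi/2<b$), whereas you handle both endpoints in one stroke by mapping $I$ onto a model interval $J$ and clamping afterwards with a single non-decreasing $\sigma\colon\R\to I$; this lets you use an unbounded $\eta$ since the subsequent $\sigma$ absorbs any overshoot on the right, while the paper's order of operations forces the bounded choice $\varepsilon\arctan$. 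One small clarification would sharpen the write-up: when $a\notin I$ (resp.\ $b\notin I$) you should state explicitly that $J$ is chosen \emph{unbounded} on the left (resp.\ right), since otherwise the continuous map $\sigma\colon\R\to I$ extending $\rho^{-1}$ cannot exist.
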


\begin{proof}
Given $f$, denote $a:=\inf f(A\cap Y)$ and $b:=\sup f(A \cap Y)$, and notice that $a<b$ since $f$ is not constant.
If $f<b$ on $A\cap Y$, fix an increasing homeomorphism $\phi$ of $(-\infty,b)$ onto $\R$. Then $g:=\phi\circ f$ is a 
(non-constant) continuous quasiconvex function on $A\cap B$. By (a), $g$ admits a continuous quasiconvex extension
$G\colon A\to\R$. Then $F_1:=\phi^{-1}\circ G$ is a continuous quasiconvex extension of $f$ such that $F_1<b$.
If $b\in f(A\cap Y)$, take an arbitrary continuous quasiconvex extension $G\colon A\to \R$ of $f$, and define
$F_1(x):=\min\{G(x),b\}$ ($x\in A$). Then $F_1$ is a continuous quasiconvex extension of $f$ such that $F_1\le b$.

If $a=-\infty$ we simply put $F:=F_1$, and we are done. Now, assume that $a\in\R$. Fix some $\epsilon>0$ such that $a+\epsilon(\pi/2)<b$,
and define $F(x):=\max\{a+\epsilon\arctan{\delta(x)}, F_1(x)\}$ ($x\in A$). Then $F$ is clearly continuous and quasiconvex,
and $F(x)=F_1(x)$ whenever $F_1(x)>a+\epsilon(\pi/2)$. Moreover,
for $x\in A$, one has
$F(x)=a$ if and only $x\in Y$ and $f(x)=F_1(x)=a$. It follows that $F$ has the desired properties.
\end{proof}

\begin{remark}\label{constant}
Notice that, under the assumptions of the above theorem, if $f\equiv c$ is constant on $A\cap Y$ then 
$F(x):=c+\delta(x)$ ($x\in A$) is a quasiconvex extension of $f$ with the same set of minimizers (but, of course, not the same
range).
\end{remark}

Thanks to Theorem~\ref{CEimpliesQE}, we can now obtain some sufficient conditions for extendability of continuous quasiconvex functions.
Let us state some examples. A t.v.s.\ is called {\em conditionally separable} (see \cite{DEVEX1}) if for each neighborhood $V$ of $0$
there exists a countable set $E$ such that $E+V=X$. For metrizable spaces, conditional separability is equivalent to separability.
In general, separability implies conditional separability but not vice-versa.

\begin{theorem}[Extension to the whole space]\label{applCE}
Let $Y$ be a closed subspace of a t.v.s.\ $X$. 
Assume that at least one of the following conditions holds.
\begin{enumerate}[(a)]
\item $Y$ is complemented in $X$.
\item $X$ is locally convex,  and $X/Y$ is conditionally separable and admits a continuous norm.
\end{enumerate}
Then each continuous quasiconvex function $f$ on $Y$ admits a continuous quasiconvex extension $F$ defined on
the whole $X$.

Moreover, if $f$ is non-constant and, in the case (a) also the origin of $X/Y$ is a countable intersection of open convex sets, 
then the extension $F$ can be chosen so that\ \ 
$F(X)=f(Y)$\ \  and\ \  $\mathrm{arg\,min}_X F= \mathrm{arg\,min}_{Y} f\,$. 
\end{theorem}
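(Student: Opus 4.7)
My strategy is to establish the $QE(X)$-property in cases (a) and (b) by different routes, and then invoke Proposition~\ref{better} to upgrade to the ``moreover'' conclusion. Case (a) I would handle directly: if $P\colon X\to Y$ is a continuous linear projection witnessing that $Y$ is complemented, then for any continuous quasiconvex $f\colon Y\to\R$ the composition $F:=f\circ P$ is a continuous extension, and it is quasiconvex because $P$ is affine, so
\[
F\bigl(\lambda x_1+(1-\lambda)x_2\bigr)=f\bigl(\lambda P(x_1)+(1-\lambda)P(x_2)\bigr)\le\max\{F(x_1),F(x_2)\}.
\]

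For case (b) I would invoke Theorem~\ref{CEimpliesQE} with $A=X$. The existence of a continuous norm on $X/Y$ is condition $(c_5)$ of Lemma~\ref{L:G-delta}, which supplies $(c_1)$: a continuous convex $d\colon X\to[0,\infty)$ with $d^{-1}(0)=Y$. The $CE(X)$-property itself is a known consequence of the local convexity of $X$, the conditional separability of $X/Y$, and the presence of a continuous norm on $X/Y$ (to be cited from \cite{DEVEX1}). Feeding these inputs into Theorem~\ref{CEimpliesQE} yields the $QE(X)$-property.

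For the ``moreover'' part, I apply Proposition~\ref{better}, which requires a continuous quasiconvex $\delta\colon X\to[0,\infty)$ with $\delta^{-1}(0)=Y$. In case (b) I would take $\delta:=\nu\circ q$, where $q\colon X\to X/Y$ is the quotient map and $\nu$ the given continuous norm on $X/Y$. In case (a), the extra hypothesis that the origin of $X/Y$ is a convexly $G_\delta$-set is precisely condition $(b_2)$ of Lemma~\ref{L:G-delta}; since $A=X$, the lemma yields $(a_1)$, namely the required $\delta$. Proposition~\ref{better} then delivers an extension $F$ of the non-constant $f$ with $F(X)=f(Y)$ and $\mathrm{arg\,min}_X F=\mathrm{arg\,min}_Y f$. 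The main obstacle is essentially bookkeeping: pinpointing the correct $CE(X)$-property result in \cite{DEVEX1} for case (b), and verifying in each case that Lemma~\ref{L:G-delta} supplies the $\delta$ needed by Proposition~\ref{better}; once these ingredients are lined up, the proof is a direct assembly of Theorem~\ref{CEimpliesQE}, Lemma~\ref{L:G-delta}, and Proposition~\ref{better}.
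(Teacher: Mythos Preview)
Your proposal is correct and follows essentially the same route as the paper: case~(a) via $F:=f\circ P$, case~(b) via \cite[Theorem~4.5]{DEVEX1} (the $CE(X)$ result you allude to) combined with Lemma~\ref{L:G-delta} and Theorem~\ref{CEimpliesQE}, and the ``moreover'' part via Lemma~\ref{L:G-delta} and Proposition~\ref{better}. Your $\delta:=\nu\circ q$ in case~(b) is exactly what the implication $(c_5)\Rightarrow(c_2)\Rightarrow(a_1)$ in Lemma~\ref{L:G-delta} produces.
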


\begin{proof}
(a) Let $P\colon X\to Y$ be a continuous linear projection onto $Y$. Then clearly $F:=f\circ P$ is 
a continuous quasiconvex extension of $f$. So $(X,Y)$ has the $QE(X)$-property. The second part
follows from Lemma~\ref{L:G-delta} and Proposition~\ref{better}.

(b) By \cite[Theorem~4.5]{DEVEX1}, $(X,Y)$ has the $CE(X)$-property. By Lemma~\ref{L:G-delta} and
Theorem~\ref{CEimpliesQE}, $(X,Y)$ has the $QE(X)$-property. The last part follows again from Proposition~\ref{better}.
\end{proof}

Let us recall that a Banach space $X$ is said to have the {\em separable complementation property}
if every separable subspace of $X$ is contained in a separable complemented subspace of $X$.
It is known (see e.g.\ \cite[pp.\ 481--482]{PY}) that this property holds in any of the following cases:
(i) $X$ is weakly compactly generated;
(ii) $X$ is dual and has the Radon-Nikod\'ym property;
(iii) $X$ has a countably norming M-basis;
(iv) $X$ is a Banach lattice not containing $c_0$.

\begin{theorem}[Extension to an open convex set]\label{applSCE}
Let $A$ be an open convex set in a locally convex t.v.s.\ $X$, and $Y\subset X$ 
a closed subspace such that $X/Y$ admits a continuous norm.
Assume that $A\cap Y\ne\emptyset$ and at least one of the following conditions is satisfied.
\begin{enumerate}[(a)]
\item $Y$ is finite-dimensional.
\item $X$ is conditionally separable.
\item $Y$ is conditionally separable and complemented.
\item $X$ is a Banach space with the separable complementation property and $Y$ is separable.
\item $A\cap Y$ is weakly open in $Y$, and either $Y$ is complemented or $X/Y$ is conditionally separable.
\item $Y$ is a Banach space, $A\cap Y$ is a countable intersection of open half-spaces in $Y$, and 
either $Y$ is complemented or $X/Y$ is conditionally separable.
\end{enumerate}
Then each
continuous quasiconvex function $f$ on $A\cap Y$ can be extended to a continuous quasiconvex function $F$
on $A$.

Moreover, if $f$ is non-constant then the extension $F$ can be chosen so that\ \ 
$F(A)=f(A\cap Y)$\ \  and\ \  $\mathrm{arg\,min}_A F= \mathrm{arg\,min}_{A\cap Y} f\,$. 
\end{theorem}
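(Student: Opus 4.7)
The strategy is entirely parallel to that of Theorem~\ref{applCE}: each of the conditions (a)--(f) will be shown to imply the $CE(A)$-property of $(X,Y)$ via results already available in \cite{DEVEX1, DEVEX2}, and then Theorem~\ref{CEimpliesQE} will promote this to the $QE(A)$-property. For the latter invocation, we need a continuous convex function $d\colon A\to[0,\infty)$ vanishing exactly on $A\cap Y$. This is supplied by the assumption that $X/Y$ admits a continuous norm: applying Lemma~\ref{L:G-delta}, the equivalent condition $(c_5)$ gives $(c_1)$, i.e.\ a continuous convex function $d\colon X\to[0,\infty)$ with $d^{-1}(0)=Y$; restricting to $A$ does the job. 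Note that $Y$ is necessarily closed in $X$, since $(c_5)$ forces $X/Y$ to be Hausdorff.

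The plan for each case is then to cite the appropriate $CE(A)$-result. Case (a), when $Y$ is finite-dimensional, is covered by the corresponding open-convex-set extension theorem of \cite{DEVEX2}. Cases (b), (c) and (d) should follow from the sectional/separable-complementation $CE(A)$-theorems in \cite{DEVEX1} (for (d), any separable subspace containing a given $y_0\in A\cap Y$ and a countable dense subset of $Y$ lies in a separable complemented subspace, reducing to (c)). Cases (e) and (f) are the situations handled in \cite{DEVEX1} where the structural assumption on $A\cap Y$ (weakly open, respectively a countable intersection of open half-spaces) combines with either complementation or conditional separability of $X/Y$ to yield the $CE(A)$-property. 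In each case, once $(X,Y)$ has the $CE(A)$-property, Theorem~\ref{CEimpliesQE} applies and gives that $(X,Y)$ has the $QE(A)$-property, so every continuous quasiconvex $f$ on $A\cap Y$ admits the desired extension.

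For the ``moreover'' part, we appeal to Proposition~\ref{better}. Its hypothesis (a) is just the $QE(A)$-property already established, and its hypothesis (b) asks for a continuous quasiconvex $\delta\colon A\to[0,\infty)$ with $\delta^{-1}(0)=A\cap Y$; this is weaker than what we already have, since the $d$ produced above is already convex, hence quasiconvex. Proposition~\ref{better} then yields an extension $F$ with $F(A)=f(A\cap Y)$ and $\mathrm{arg\,min}_A F=\mathrm{arg\,min}_{A\cap Y} f$, completing the proof.

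The main obstacle is bookkeeping: identifying, case by case, the precise $CE(A)$-result in \cite{DEVEX1} or \cite{DEVEX2} that handles each of (a)--(f). There is no genuinely new argument to make in the quasiconvex world beyond what Theorem~\ref{CEimpliesQE} already packages; the structural work has been done in Sections \ref{S:2ext_prop} and \ref{S:relationship}, and the dependence on the various assumptions on $X$, $Y$ and $A\cap Y$ is entirely inherited from the convex theory.
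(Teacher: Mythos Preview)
Your proposal is correct and follows exactly the paper's strategy: use Lemma~\ref{L:G-delta} to obtain the function $d$, invoke Theorem~\ref{CEimpliesQE} to reduce $QE(A)$ to $CE(A)$, cite the relevant $CE(A)$-results for each of (a)--(f), and conclude with Proposition~\ref{better}. The only discrepancies are the bookkeeping issues you anticipated---the paper attributes (a) to \cite[Proposition~3.7(b)]{DEVEX1}, (b)--(d) to \cite[Corollary~2.3]{DEVEX2}, and (e),(f) to \cite[Corollaries~2.4(a) and 2.6(a)]{DEVEX2} after first obtaining the $CE(X)$-property from \cite{DEVEX1}---and your remark that $Y$ is closed is redundant, since this is already among the hypotheses.
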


\begin{proof} Since $X/Y$ admits a continuous norm, equivalence of the conditions in Lemma~\ref{L:G-delta}, C, ensures existence of a continuous seminorm on $X$ whose kernel is $Y$.
By Theorem~\ref{CEimpliesQE} it suffices to show that $(X,Y)$ has the $CE(A)$-property. This is true
for (a) by \cite[Proposition~3.7(b)]{DEVEX1}. The cases (b),(c) and (d) 
were proved in \cite[Corollary~2.3]{DEVEX2}. 
If either $Y$ is complemented or $X/Y$ is conditionally separable then $(X,Y)$ has the $CE(X)$-property
by Proposition~3.3(a) and Theorem~4.5 in \cite{DEVEX1}. Now, (e) and (f) follow from 
Corollary~2.4(a) and Corollary~2.6(a) in \cite{DEVEX2}, respectively. 
Finally, the last part follows from Proposition~\ref{better}.
\end{proof}

\medskip

Let us finally present some examples  in which $QE(X)$ does not hold.
 As a consequence of Theorem~\ref{QEimpliesCE}, \cite[Proposition~4.4]{DEVEX1}, \cite[Proposition~4.5]{DEVEX1}, \cite[Corollary~4.6]{DEVEX1}, and \cite[Remark~5.6]{DEVEX1}, we immediately obtain the following corollary.

\begin{corollary}
	The couple $(X,Y)$ fails the ${QE}(X)$-property in any of
	the following cases.
	\begin{enumerate}
		\item $Y$ is an infinite-dimensional
		closed subspace of $X=\ell_\infty(\Gamma)$ isomorphic to some $c_0(\Lambda)$ or
		$\ell_p(\Lambda)$ with $1<p<\infty$.
		\item $Y$ is an infinite-dimensional
		closed subspace of $X=\ell_\infty(\Gamma)$ not containing any isomorphic copy of $\ell_1$.
		\item   $X$ is a Grothendieck Banach space but this is not the case for $Y$.
		\item $Y$ is a separable nonreflexive Banach space, considered as a subspace of $X=\ell_\infty$.
		\item $X=L^p([0,1])$ with $0<p<1$ and  $Y$ is a finite-dimensional subspace of $X$.
	\end{enumerate}
	In particular, $(\ell_\infty,c_0)$ fails the ${QE}(X)$-property.
\end{corollary}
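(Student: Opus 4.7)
The plan is to argue by contraposition, via Theorem~\ref{QEimpliesCE}: if $(X,Y)$ has the $QE(X)$-property then it has the $CE(X)$-property, so whenever the $CE(X)$-property is known to fail, the $QE(X)$-property must fail as well. Thus the corollary reduces entirely to collecting the corresponding failures of the $CE(X)$-property from \cite{DEVEX1}, each of which has already been established there.

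Concretely, for case (1) I would invoke \cite[Proposition~4.4]{DEVEX1}, which shows that any closed infinite-dimensional subspace $Y$ of $X=\ell_\infty(\Gamma)$ isomorphic to some $c_0(\Lambda)$ or $\ell_p(\Lambda)$ ($1<p<\infty$) fails the $CE(X)$-property; for case (2) the analogous conclusion is \cite[Proposition~4.5]{DEVEX1}, covering closed infinite-dimensional subspaces of $\ell_\infty(\Gamma)$ not containing a copy of $\ell_1$; for case (3) it is \cite[Corollary~4.6]{DEVEX1}, since a Grothendieck space cannot have a non-Grothendieck subspace enjoying $CE(X)$; and for case (5) it is \cite[Remark~5.6]{DEVEX1}, where the failure of $CE(X)$ is recorded for finite-dimensional subspaces of $L^p([0,1])$ with $0<p<1$. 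For case (4), one observes that $\ell_\infty$ is a Grothendieck space, so if $Y$ is a separable nonreflexive subspace, $Y$ cannot be Grothendieck (since Grothendieck plus separable forces reflexivity), and case (3) already applied to this configuration gives the failure of $CE(X)$. Applying Theorem~\ref{QEimpliesCE} in contrapositive form to each of these five situations then delivers the failure of $QE(X)$.

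The final assertion, that $(\ell_\infty,c_0)$ fails $QE(X)$, is an immediate specialization: with $\Gamma=\N$, $Y=c_0$ is an infinite-dimensional closed subspace of $\ell_\infty$ isomorphic to $c_0(\N)$, so case (1) applies directly.

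No step requires new analytic work; the potential subtlety is merely bookkeeping in case (4), where one must ensure that being a separable nonreflexive subspace of the Grothendieck space $\ell_\infty$ genuinely places $Y$ inside the scope of \cite[Corollary~4.6]{DEVEX1}, which it does because a separable Grothendieck space is reflexive. Everything else is a direct citation plus one application of Theorem~\ref{QEimpliesCE}.
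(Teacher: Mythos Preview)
Your proposal is correct and follows essentially the same approach as the paper: the paper's ``proof'' is the sentence preceding the corollary, which simply invokes Theorem~\ref{QEimpliesCE} together with \cite[Proposition~4.4]{DEVEX1}, \cite[Proposition~4.5]{DEVEX1}, \cite[Corollary~4.6]{DEVEX1}, and \cite[Remark~5.6]{DEVEX1}. Your handling of case~(4) via case~(3), using that $\ell_\infty$ is Grothendieck and that separable Grothendieck spaces are reflexive, is exactly the intended reduction (explaining why four citations suffice for five cases).
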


%%%%%%%%%%%%%%%%%%%%%%%%%%%%%%%%%%%%

\medskip

\section*{Appendix: proof of Lemma~\ref{L:G-delta}}

\begin{proof}[Proof of Lemma~\ref{L:G-delta}]
 \ 

A. The implication
$(a_1)\Rightarrow(a_2)$ follows immediately by considering the open convex sets $U_n:=[\delta<1/n]$ ($n\in\N$).
Let us show that
$(a_2)\Rightarrow(a_1)$. We can (and do) assume that $0\in A$.
Let $\{U_k\}_{k\in\N}$ be a decreasing sequence of open 
convex sets in $A$ such that $\bigcap_{k\in\N}U_k=A\cap Y$.
Define $V_k:=\frac{k+1}{k}U_k$ ($k\in\N$), and notice that 
$$
\text{$\textstyle V_{k+1}\subset\frac{k(k+2)}{(k+1)^2}V_k$ 
($k\in\N$), and $\textstyle A\cap Y\subset \bigcap_{k\in\N}V_k\subset 2\bigcap_{k\in\N}U_k=2A\cap Y$.}
$$
Now let us define
a family $\{W_n\}_{n\in\Z}$ of open convex sets in $X$ by
$$
W_n:=V_{|n|}\ \text{for $n<0$ integer,}\ \text{and}\ 
W_n:=(n+2)V_1\ \text{for $n\ge0$ integer.}
$$
It is easy to see that, for each $n\in\Z$,
$$
W_n\subset\lambda_{n+1}W_{n+1}\quad\text{with some $\lambda_{n+1}\in(0,1)$.}
$$
Let us extend $\{W_n\}_{n\in\Z}$ to a family $\{W_\alpha\}_{\alpha\in\R}$ by defining
$$
W_\alpha:=\phi_n(\alpha)W_n\quad\text{whenever $\alpha\in[n,n+1)$, $n\in\Z$,}
$$
where $\phi_n(\alpha):=(n+1-\alpha)\lambda_n+(\alpha-n)$.
Since each $\phi_n$ is an increasing affine function such that $\phi_n(n)=\lambda_n$ and $\phi_n(n+1)=1$, 
it is easy to see that for each couple of reals $\alpha<\beta$ we have
$W_\alpha\subset\lambda_{\alpha,\beta}W_\beta$ with $\lambda_{\alpha,\beta}\in(0,1)$, and hence
$\overline{W}^{X}_\alpha\subset W_\beta$. Moreover,
$A\cap Y\subset\bigcap_{\alpha\in\R}W_\alpha\subset 2A\cap Y$ and $\bigcup_{\alpha\in\R}W_\alpha=X$.

Fix an increasing homeomorphism $h$ of $(0,\infty)$ onto $\R$, and define
$$
D_\alpha:=W_{h(\alpha)}\cap A\ \text{for $\alpha>0$,}
\quad\text{and}\quad
D_\alpha:=\emptyset\ \text{for $\alpha\le0$.}
$$
Then $\{D_\alpha\}_{\alpha\in\R}$ is an $\Omega(A)$-family of open convex sets in $X$.  
Proposition~\ref{P:lowerlevelsets} assures that the formula 
$$
\delta(x):=\sup\{\alpha\in\R; \, x\notin D_\alpha\}\quad(x\in A)
$$
defines a continuous quasiconvex real-valued function on $A$. Clearly, $\delta\ge0$ and by Proposition~\ref{P:lowerlevelsets},
$$
\delta^{-1}(0)=[\delta\le 0]=\bigcap_{\alpha>0}D_\alpha=\bigcap_{n\in\N}V_n\cap A=A\cap Y.
$$

\smallskip

B. The equivalence $(b_1)\Leftrightarrow(b_2)$ follows by the first part of the proof
(applied with $Y=\{0\}\subset X/Y$).

\smallskip

C. Conditions $(c_k)$.

$(c_4)\Rightarrow(c_5)$. If ($c_4$) holds, there exists $U\subset X/Y$, 
an algebraically bounded, symmetric, open convex neighborhood of the origin in $X/Y$.
The Minkowski gauge of $U$ is a continuous seminorm on $X/Y$, which is even a norm since
$U$ is algebraically bounded.

$(c_5)\Rightarrow(c_2)$. If $\nu$ is a continuous norm on $X/Y$, then $p:= \nu\circ q$ (where $q$ is the quotient map)
satisfies ($c_2$).

$(c_2)\Rightarrow(c_1)$ is obvious.

$(c_1)\Rightarrow(c_3)$. If $d$ is as in ($c_1$), let us define $V:=[d<1]$. If $y\in Y$ then 
$y/\epsilon\in V$ ($\epsilon>0$), and hence $y\in\bigcap_{\epsilon>0}\epsilon V$. On the other hand,
if $x\in\bigcap_{\epsilon>0}\epsilon V$ then $d(x/\epsilon)<1$ for each $\epsilon>0$. By convexity,
for each $\epsilon\in(0,1)$ we obtain $d(x)=d\bigl(\epsilon(x/\epsilon)+(1-\epsilon)0\bigr)\le \epsilon d(x/\epsilon)<\epsilon$.
Then $d(x)=0$, that is, $x\in Y$. Thus $V$ satisfies ($c_3$).

$(c_3)\Rightarrow(c_4)$. Let $V$ be as in ($c_3$). It is easy to see that $V+Y=V$ since $V$ is convex and open. 
It follows that
the open convex neighborhood $U:=q(V)$ of $0\in X/Y$ satisfies $\bigcap_{\epsilon>0}\epsilon U=\{0\}$.
This implies that $U$ is algebraically bounded. We are done.

\smallskip

D. The implication $(c_5)\Rightarrow(b_2)$ is obvious. If $(b_2)$ holds then the function $\delta:=f\circ q$ 
(where $q\colon X\to X/Y$ is the quotient map) satisfies the condition contained in $(a_1)$.

\smallskip

E. Now, for $A=X$, let us show that $(a_2)\Rightarrow(b_1)$. 
Let $U_n\subset X$ ($n\in\N$) be open convex sets such that $\bigcap_{n\in\N}U_n=Y$.
For each $n\in\N$,  $U_n+Y=U_n$ since $U_n$ is convex and open. It follows easily that the intersection of the open convex sets 
$V_n:=q(U_n)\subset X/Y$ ($n\in\N$) is just the origin of $X/Y$.

\end{proof}

\bigskip

\subsection*{Acknowledgement}

The research of the first author has been partially
supported by the GNAMPA (INdAM -- Istituto Nazionale di Alta Matematica) Research Project 2020 and by the Ministry for Science and Innovation, Spanish State Research Agency (Spain),  under project 
PID2020-112491GB-I00.
The research of the second author has been partially supported by the GNAMPA (INdAM -- Istituto Nazionale di Alta Matematica) Research Project 2020 and by the University of Milan, Research Support Plan PSR 2020.

\end{document}